\newtheorem{theorem}{Theorem}[section]
\newtheorem{lemma}[theorem]{Lemma}
\newtheorem{proposition}[theorem]{Proposition}
\newtheorem{corollary}[theorem]{Corollary}
\newtheorem{definition}[theorem]{Definition}
\DeclareMathOperator{\Var}{Var}
\DeclareMathOperator{\rado}{rado}
\title{Generating infinite random graphs}
\author[Bir\'o]{Csaba Bir\'o}
\author[Darji] {Udayan B.~Darji}
\email[Darji]{ubdarj01@louisville.edu}
\email[Bir\'o]{csaba.biro@louisville.edu}
\address{Department of Mathematics, University of Louisville, Louisville, KY 40292, USA}
\keywords{Erd\H os--R\'enyi graph, random graph, infinite graph, trees, homogeneous structure}
\subjclass[2010]{Primary:  05C63, 05C80; Secondary: 05C05, 60C99}
\begin{document}

\begin{abstract}
We define a growing model of random graphs. Given a sequence of nonnegative
integers $\{d_n\}_{n=0}^\infty$ with the property that $d_i\leq i$, we construct a
random graph on countably infinitely many vertices $v_0,v_1\ldots$ by the
following process: vertex $v_i$ is connected to a subset of
 $\{v_0,\ldots,v_{i-1}\}$ of cardinality $d_i$ chosen uniformly at
random. We study the resulting probability space. In particular, we give a new
characterization of random graph and we also give probabilistic methods for constructing
infinite random trees. 
\end{abstract}

\maketitle

\section{Introduction}

Consider the vertex set ${\mathbb N}$. Let $0 < p <1$ be fixed. For each pair
of distinct integers $n, m \in {\mathbb N}$, put an edge  between $n$ and $m$
with probability $p$. Let $G$ be the resulting graph on ${\mathbb N}$. A
classical 1963 Erd\H os--R\'enyi theorem \cite{Erd-Ren-63} states that with
probability one, any two such graphs are isomorphic, i.e., there is essentially
one random graph on ${\mathbb N}$.

In 1964 Rado \cite{Rad-64} gave an explicit construction of a graph $R$ which
is universal for the collection of all countable graphs. More precisely, he
showed that if $G$ and $H$ are any countable graphs and $\phi:G \rightarrow H$
a graph homomorphism, then there are embeddings $e_G:G \rightarrow R$, $e_H:H
\rightarrow R$ and a graph homomorphism $\psi: R \rightarrow R$ such that
$e_H^{-1}\circ\psi \circ e_G = \phi$, i.e., $R$ contains a copy of every
countable graph and every graph homomorphism between countable graphs can be
lifted to a graph homomorphism of $R$. 

The constructions of Erd\H os--R\'enyi and Rado seem very different but they
result in the same graph.  The reason for this is that both graphs satisfy the
following property: if $(A,B)$ are disjoint, finite sets of vertices, then
there are infinitely many vertices $v$ such that there is an edge between $v$ and
every element of $A$ and there are no edges between $v$ and any element of $B$.
It can be shown by back and forth method that any two graphs with above
property are isomorphic to each other. A graph with this property is often
called the Erd\H os--R\'enyi graph, the Rado graph or simply the random graph. 

Although the Rado graph is unique and the above definition is very simple, the
Rado graph has a rich structure and it enjoys attention from mathematicians
working in various camps. For example Cameron \cite{Cam-01} gave a number
theoretic description of this graph similar to that of the Paley graph. This
graph also enjoys attention from model theorists as it is an example of an
$\aleph_0$-categorical Fra\"iss\'e limit of finite structures.  Truss \cite{Tru-85}
initiated the group theoretic study of the group of automorphisms of the Rado
graph. We refer the reader to the survey paper of Cameron \cite{Cam-01} for
further interesting properties of the random graph along this direction.
Connections between percolation theory and the random graphs can be found in
the survey paper of  van der Hofstad \cite{Hof-10}. That Rado graph can be topologically
2-generated with a great deal flexibility was shown by the second author and Mitchell
\cite{Darji-Mitchell}.

Inspired by the construction of Erd\H os and R\'enyi, we 
introduced a procedure that is flexible enough to generate a large class of
infinite graphs, and essentially generalizes the Erd\H os--R\'enyi process. We add only finitely many incident vertices to each vertex,
determined by a given sequence.

More rigorously, suppose a sequence of integers $\{d_i\}_{i=0}^\infty$ is
given, with the property $0\leq d_i\leq i$ for all $i$. Let
$V=\{v_0,v_1,\ldots\}$ be a set of vertices.  For $i=0,1,\ldots$, in round
$i$, we first choose $A\subseteq \{v_0,\ldots,v_{i-1}\}$  of cardinality $d_i$ with
the uniform distribution on the set of all sets of size $d_i$ subsets of $
\{v_0,\ldots,v_{i-1}\}$. Then, we add edges $v_iu$ for all $u\in A$. The result is
a random graph on countably many vertices. We strive to understand the
resulting probability space, in particular, we would like to determine the
atoms (graphs with positive probability), and cases when there is only one atom
with probability $1$. In this latter case, we say that the probability space is
\emph{concentrated}.

\subsection{Related literature}

\subsubsection{Preferential attachment models}

In the preferential attachment models, the new vertex is adjacent to earlier
vertex or vertices with a probability that depends on the current
degree of an existing vertex. One of the first examples of empirical study of
this model is by Barab\'asi and Albert \cite{Bar-Alb-99}, and a rigorous
mathematical framework was defined by Bollob\'as and Riordan \cite{Bol-Rio-04}.
These, and subsequent works study large finite graphs as opposed to the
limiting behavior.

An infinite version of the preferential attachment model (for multigraphs) was studied by
Kleinberg and Kleinberg \cite{Kle-Kle-05}. In their paper, the sequence $d_i$ is
constant. Since the preferential attachment model is substantially different
from ours, they get very different results, but some of the techniques they use
are similar to ours.

\subsubsection{Copying models}

This model was first introduced by Kumar et al.\
\cite{Kum-Rag-Raj-Siv-Tom-Upf-99}, and later, a slightly modified and
generalized version was defined by Bonato and Janssen \cite{Bon-Jan-07}. In
their construction, besides the sequence $d_i$, an initial finite graph $H$ and a probability $p\in[0,1]$
is given.
\begin{itemize}
\item Let $G_0=H$.
\item To construct $G_i$, add a new vertex $v$ to $G_{i-1}$ and choose its neighbors as follows.
\begin{itemize}
\item Choose a vertex $u\in V(G_{i-1})$ uniformly at random (called the
\emph{copy vertex}). Connect $v$ to each neighbor of $u$ with probability $p$.
\item Choose a set of $d_i$ vertices from $V(G_{i-1})$ uniformly at random, and
connect $v$ to each vertex in this set.
\item Delete multiple edges if necessary.
\end{itemize}
\end{itemize}
Clearly, our process is a special case of this, when $p=0$.

In \cite{Bon-Jan-07}, the authors only study the case when $d_i=\Theta(i^s)$ for
some $s\in [0,1)$. Although we study very similar models, the common
special case of our theorems is quite narrow: we imposed the extra condition
that $p=0$, and they imposed strong extra conditions on $d_i$.
Nevertheless, it is interesting to note that
for the narrow special case when our assumptions coincide, our
Theorem~\ref{thm:rado} implies the conclusions of their main theorems
(Theorems~2.2 and 2.3 in \cite{Bon-Jan-07}), and more.

\subsubsection{The Janson--Severini process}

Janson and Severini
\cite{Jan-Sev-13} introduced a process that also includes ours. Their construction is the following.
For all $i=1,\ldots$, let $\nu_i$ be a probability distribution on
$\{0,1,\ldots,i\}$. Construct the random graph $G_i$ as follows.
\begin{itemize}
\item Let $G_0=K_1$, the graph on a single vertex.
\item Let $D_i$ be a random variable with distribution $\nu_i$, and construct
$G_i$ by adding a new vertex to $G_{i-1}$ and connecting it to a uniformly
random subset of size $D_i$ of $V(G_{i-1})$.
\end{itemize}

Of course our model is the special case of theirs when $\nu_i$ is a point mass
at $d_i$. In fact, as an application of our theorems, we venture out to prove
certain limiting behavior in their general model (which we call the ``double
random process'') in Corollary~\ref{cor:dblrnd1}, and
Theorem~\ref{thm:dblrnd2}. However, unlike us, they study the graphons, as
limits of their sequence. (Graphons were introduced by Lov\'asz and Szegedy
\cite{Lov-Sze-06} and Borgs, Chayes, Lov\'asz, S\'os and Vesztergombi
\cite{Bor-Cha-Lov-Sos-Ves-08}.) In their main theorem, they determine the limit graphon when $D_n/n\overset{\text{p}}\to\nu$ for some probability measure $\nu$ on $[0,1]$.

\section{Summary and outline}

In Section~\ref{s:nonconcentrated} we discuss some minor results. We quickly
show how different this model is from the Erd\H os--R\'enyi model in that it
can easily result in non-concentrated spaces.

The main discussion starts in Section~\ref{s:rado}. The paper
contains two major results. In Section \ref{s:rado}, we prove the first one (stated in this section as Theorem~\ref{thm:rado}), which was
motivated by the effort of characterizing the sequences that will a.s.\ result
in the Rado graph. We did more than that: we defined a degree of similarity of
a graph to the Rado graph, and we can determine from the sequence how similar
the resulting graph will be to the Rado graph.

\begin{definition}
Let $G$ be a graph and $A,B\subseteq V(G)$. We say that a vertex $v$ is a
\emph{witness} for the ordered pair $(A,B)$, if $v$ is adjacent to every vertex
in $A$, and $v$ is not adjacent to any vertex in $B$.
\end{definition}

\begin{definition}
Let $G$ be a graph. For a nonnegative integer $k$, we say that $G$ is \emph{$k$-Rado}, if  every pair of disjoint
sets of vertices $(A,B)$ with $|A| \le k, |B| \le k$ has infinitely many witnesses.

The number $\rado(G)=\sup\{k: \text{$G$ is $k$-Rado}\}$ is the \emph{radocity} of $G$.
\end{definition}

Clearly every graph is $0$-Rado, and if a graph is $k$-Rado, it is also
$k'$-Rado for all $k'<k$. Also, by the Erd\H os--R\'enyi Theorem, $G$ is
isomorphic to the Rado graph if and only if $\rado(G)=\infty$.

We note that the definition of a witness is not new. Clearly Erd\H os and
R\'enyi knew about the property, and the same language is used by Spencer in
the book \cite{Spe-TSLORG}. Similar properties for a graph to be $k$-Rado also
appeared in the literature. Still in Spencer's book, the property $A_{r,s}$ is
defined: a graph satisfies the property $A_{r,s}$, if every pair of disjoint
sets of vertices $(A,B)$ with $|A|=r$, $|B|=s$ has a witness. Note the major
difference that $A_{r,s}$ requires only one witness, while $k$-Rado requires
infinitely many witnesses, so e.g.\ a double ray has $A_{1,1}$, but it is not $1$-Rado.

Another similar property is called $n$-e.c.\ (see e.g.\cite{Bon-09}). A graph
has this property, if every pair of disjoint sets of vertices $(A,B)$ with
$|A\cup B|=n$ has a witness. So a graph has $n$-e.c.\ if and only if it has
$A_{r,s}$ for all $r+s=n$.

Finally, Winkler used the colorful term \emph{Alice's Restaurant property} for
a graph that is $k$-Rado for all $k\geq 0$, in other words, the radocity of the
graph is $\infty$. As mentioned above, this happens if and only if the graph is
the Rado graph.

In Section~\ref{s:rado} we will prove the following theorem. It shows that the
radocity of the graph is determined by the sequence, not by the random process.
In the statement, and throughout the paper, we will use the standard notation
$n_{(k)}=n(n-1)\ldots(n-k+1)$ with $n_{(0)}=1$ (even if $n=0$). In addition, we
define $0^0=1$ if this power appears as a term of a series.

\begin{theorem}\label{thm:rado}
As before, let $\{d_i\}$ be such that $0\le d_i\le i$. Let
\begin{align*}
k_1&=\sup\left\{t\in\mathbb{N}:\sum_{n=1}^\infty\left(\frac{d_n}{n}\right)^t\left(\frac{n-d_n}{n}\right)^t=\infty\right\},\\
k_2&=\sup\left\{t\in\mathbb{N}:\sum_{n=1}^\infty
\frac{(d_n)_{(t)}(n-d_n)_{(t)}}{(n)_{(2t)}}=\infty\right\}.
\end{align*}
Then $k_1=k_2$, and the process a.s.\ generates a graph of radocity $k_1$ (and $k_2$). 
\end{theorem}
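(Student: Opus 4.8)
The plan is to fix an ordered pair $(A,B)$ of disjoint finite sets of vertices and study the events $E_n^{(A,B)}=\{v_n\text{ is a witness for }(A,B)\}$. Choose $N$ so large that $A\cup B\subseteq\{v_0,\dots,v_{N-1}\}$. For $n\ge N$ the event $E_n^{(A,B)}$ depends only on the subset $A_n$ chosen in round $n$ (edges between $v_n$ and vertices of smaller index are decided exactly by $A_n$), so the events $\bigl(E_n^{(A,B)}\bigr)_{n\ge N}$ are mutually independent, and a direct count gives
\[
\Pr\bigl(E_n^{(A,B)}\bigr)=\frac{\binom{n-|A|-|B|}{\,d_n-|A|\,}}{\binom{n}{d_n}}
=\frac{(d_n)_{(|A|)}\,(n-d_n)_{(|B|)}}{(n)_{(|A|+|B|)}},
\]
which in the balanced case $|A|=|B|=t$ is exactly the general term of the second series in the statement. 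A preliminary observation is that the two series in the definition of $k$ converge or diverge together for every fixed $t$: restricted to the $n$ with $d_n\ge t$ and $n-d_n\ge t$ the two general terms are within a fixed constant multiple of each other (the constant depending only on $t$), while the remaining $n$ contribute nothing to the falling-factorial series and only a convergent tail — bounded by a multiple of $\sum_n n^{-t}$ for $t\ge 2$, and identically zero for $t=1$ — to the power series. Hence the two displayed suprema coincide, and I will call the common value $k$.

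For the lower bound on the radocity, fix a disjoint pair $(A,B)$ with $|A|,|B|\le k$ and put $m=\max(|A|,|B|)\le k$. Since the general term $(d_n/n)^t\bigl((n-d_n)/n\bigr)^t$ is nonincreasing in $t$ (here $0\le d_n/n\le 1$) and $m\le k$, the series $\sum_n (d_n/n)^m\bigl((n-d_n)/n\bigr)^m$ diverges, and by the equiconvergence above so does $\sum_n (d_n)_{(m)}(n-d_n)_{(m)}/(n)_{(2m)}$. Using $0\le d_n\le n$ and $|A|,|B|\le m$ in the formula for $\Pr\bigl(E_n^{(A,B)}\bigr)$, a short manipulation of falling factorials gives, for all large $n$,
\[
\Pr\bigl(E_n^{(A,B)}\bigr)\ \ge\ \frac{(d_n)_{(m)}(n-d_n)_{(m)}}{\bigl((n)_{(m)}\bigr)^2}\ \ge\ \tfrac12\cdot\frac{(d_n)_{(m)}(n-d_n)_{(m)}}{(n)_{(2m)}},
\]
the last step because $(n)_{(2m)}/\bigl((n)_{(m)}\bigr)^2\to 1$. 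Hence $\sum_n\Pr\bigl(E_n^{(A,B)}\bigr)=\infty$, and the second Borel--Cantelli lemma (applicable by the independence noted above) shows that almost surely infinitely many $v_n$ are witnesses for $(A,B)$. There are only countably many such pairs, so almost surely \emph{all} of them have infinitely many witnesses, i.e.\ $G$ is almost surely $k$-Rado and $\rado(G)\ge k$. If $k=\infty$ the same argument run for every finite $m$ shows $G$ is almost surely isomorphic to the Rado graph, so $\rado(G)=\infty=k$.

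For the upper bound, assume $k<\infty$; then by definition of $k$ the series with exponent $k+1$ converges (in either equivalent form). Fix \emph{one} disjoint pair $(A_0,B_0)$ with $|A_0|=|B_0|=k+1$. For $n\ge N$ the probability $\Pr\bigl(E_n^{(A_0,B_0)}\bigr)$ equals the general term of that convergent series, so by the first Borel--Cantelli lemma almost surely only finitely many $v_n$ with $n\ge N$ are witnesses for $(A_0,B_0)$; adding the at most $N$ possible witnesses of smaller index, $(A_0,B_0)$ almost surely has only finitely many witnesses, so $G$ is almost surely not $(k+1)$-Rado. Combining the two halves, $\rado(G)=k$ almost surely.

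The probabilistic skeleton here — the exact expression for $\Pr\bigl(E_n^{(A,B)}\bigr)$, the independence of these events for a fixed pair, and the two directions of Borel--Cantelli — is routine. The real work, and the step I expect to be the main obstacle, is the combinatorial bookkeeping around falling factorials: proving that the two formulations of $k$ agree, and, in the lower-bound direction, estimating $\Pr\bigl(E_n^{(A,B)}\bigr)$ uniformly over unbalanced and nearly degenerate pairs — those that matter only when $d_n$ is close to $0$ or to $n$, where $(d_n)_{(t)}$ is no longer comparable to $d_n^{\,t}$. Everything else is bookkeeping of a more benign kind.
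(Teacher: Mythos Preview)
Your argument is correct and matches the paper's proof almost exactly: the paper computes the same witness probability $p_n=(d_n)_{(t)}(n-d_n)_{(t)}/(n)_{(2t)}$ and uses the divergence/convergence of $\sum p_n$ via the equivalent infinite-product formulation $\prod(1-p_n)$ rather than invoking Borel--Cantelli by name. The one place you work harder than necessary is the unbalanced case you flag as ``the main obstacle'': the paper sidesteps it by treating only $|A|=|B|=k$, which suffices because any witness for an enlarged balanced pair $(A',B')\supseteq(A,B)$ is automatically a witness for $(A,B)$.
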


As a corollary, we achieve our original motivation.

\begin{corollary}\label{cor:min}
Let $a_n=\min\{\frac{d_n}{n},\frac{n-d_n}{n}\}$.
\begin{enumerate}[(i)]
\item If $\sum_{n=1}^\infty a_n^k$ diverges for all positive integers $k$, then 
the process almost surely generates the Rado graph.
\item If there is a positive integer $k$ for which $\sum_{n=1}^\infty a_n^k$
converges, then the process almost surely does not generate the Rado
graph.
\end{enumerate}
\end{corollary}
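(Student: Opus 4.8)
The plan is to deduce the corollary directly from Theorem~\ref{thm:rado} by relating the sequence $a_n$ to the summand appearing in the definition of the radocity-determining number there; call that number $\kappa$. Write $c_n=\frac{d_n}{n}\cdot\frac{n-d_n}{n}$. Since $\frac{d_n}{n}+\frac{n-d_n}{n}=1$, one of the two factors equals $a_n$ and the other equals $1-a_n$, so $c_n=a_n(1-a_n)$; moreover $a_n\le\frac12$ forces $\frac12\le 1-a_n\le 1$. Hence, for every fixed $t\in\mathbb N$,
\[
2^{-t}a_n^{\,t}\ \le\ c_n^{\,t}=\Bigl(\tfrac{d_n}{n}\Bigr)^{t}\Bigl(\tfrac{n-d_n}{n}\Bigr)^{t}\ \le\ a_n^{\,t},
\]
so the series $\sum_n c_n^{\,t}$ and $\sum_n a_n^{\,t}$ converge or diverge together. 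Consequently $\kappa=\sup\{t\in\mathbb N:\sum_{n\ge 1}a_n^{\,t}=\infty\}$, and it remains only to translate the two hypotheses.

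For part (i): the assumption is that $\sum_n a_n^{\,t}$ diverges for every positive integer $t$, so $\kappa=\infty$; by Theorem~\ref{thm:rado} the process a.s.\ produces a graph of radocity $\infty$, which by the observation preceding the theorem ($\rado(G)=\infty$ iff $G$ is the Rado graph) is a.s.\ the Rado graph.

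For part (ii): since $0\le a_n\le\frac12<1$, the quantity $a_n^{\,t}$ is nonincreasing in $t$, so convergence of $\sum_n a_n^{\,k}$ for the given positive integer $k$ yields convergence of $\sum_n a_n^{\,t}$ for every $t\ge k$. Thus $\{t\in\mathbb N:\sum_n a_n^{\,t}=\infty\}\subseteq\{t\in\mathbb N:t<k\}$, so $\kappa\le k-1<\infty$; by Theorem~\ref{thm:rado} the process then a.s.\ produces a graph of finite radocity, which is a.s.\ not isomorphic to the Rado graph.

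I do not expect a genuine obstacle here, since all the substance is carried by Theorem~\ref{thm:rado}: the only points requiring care are the elementary two-sided bound $2^{-t}a_n^{\,t}\le c_n^{\,t}\le a_n^{\,t}$, which rests on the identity $\frac{d_n}{n}+\frac{n-d_n}{n}=1$, and the monotonicity of $t\mapsto a_n^{\,t}$ used in part (ii) to pass from a single exponent to all larger ones.
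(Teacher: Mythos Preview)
Your argument is correct and follows the same route as the paper --- deduce everything from Theorem~\ref{thm:rado} by comparing $\sum_n a_n^{\,t}$ with $\sum_n \bigl(\tfrac{d_n}{n}\bigr)^t\bigl(\tfrac{n-d_n}{n}\bigr)^t$ --- but your execution is tidier. The paper handles the two directions asymmetrically: for~(i) it uses the cruder bound $\bigl(\tfrac{d_n}{n}\bigr)^k\bigl(\tfrac{n-d_n}{n}\bigr)^k\ge a_n^{2k}$ (both factors are $\ge a_n$), and for~(ii) it switches to the falling-factorial form of the summand and bounds it above by $2^{2k}a_n^{\,k}$ after restricting to large~$n$. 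Your single observation $c_n=a_n(1-a_n)$ with $\tfrac12\le 1-a_n\le 1$ gives the uniform two-sided estimate $2^{-t}a_n^{\,t}\le c_n^{\,t}\le a_n^{\,t}$ in one stroke, so both parts follow from the first (rational-power) expression in Theorem~\ref{thm:rado} without ever invoking the falling-factorial version or truncating the series. The monotonicity step you add in~(ii) is also implicit in the paper's argument but is made explicit here, which is helpful.
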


This also shows that our result is essentially a generalization of the result of
Erd\H os and R\'enyi. See Section~\ref{s:rado} for more details.

\subsection{Examples}

In the following examples, to avoid clutter, we will omit floor and ceiling signs.

\begin{itemize}
\item If $d_n=n/2$, then $\rado(G)=\infty$.
\item If $0<c<1$, and $d_n=cn$, then $\rado(G)=\infty$.
\item If $d_n=\sqrt{n}$, then $\rado(G)=2$.
\item If $d_n>0$ is constant, then $\rado(G)=1$.
\item If $k\geq 1$ integer, and $d_n=n^{(k-1)/k}$, then $\rado(G)=k$.
\item If $d_n=\log n$, then $\rado(G)=1$.
\end{itemize}

In Section~\ref{s:zeroone} we focus on $0$--$1$ sequences. From the discussion
above it is clear that the resulting graphs will a.s.\ have radocity $0$ or
$1$, but we aim to describe the random graph in more details.

Recall that the probability space is \emph{concentrated} if there exists a
graph $G$ such the process generates a graph isomorphic to $G$ with probability
$1$. A graph $G$ is an \emph{atom} of the space if the process generates a
graph isomorphic to $G$ with positive probability.

To state a compact theorem we introduce some elaborate notation to denote
certain infinite graphs.  Let $T$ be a finite tree.  Let $F_T$ be the forest
that consists of infinitely many copies of $T$, as components. Let $F_n=\bigcup\{F_T: T\text{ is a tree of size }n\}$. Note that $F_1$ is
the countably infinite set with no edges, and $F_2$ is the countably infinite
matching.

We will also use the term \emph{$\omega$-tree} for the unique countably
infinite tree in which every vertex is of infinite degree.

\begin{theorem}\label{thm:zo}
Suppose $d_n\in\{0,1\}$ for all $n\in\mathbb{N}$.
\begin{enumerate}[i)]
\item If $\sum_{i=1}^\infty\frac{d_i}{i}=\infty$, then the space is
concentrated, and the atom is a graph whose components are $\omega$-trees, and
the number of components is equal to the number of zeroes in the sequence.\label{thmzo:dense}
\item Suppose $\sum_{i=1}^\infty\frac{d_i}{i}<\infty$. Let
$t_n=\sum_{i=n}^\infty \frac{d_i}{i}$, and $k=\min\{\kappa\geq 2: \sum_l d_l
t_{l+1}^{\kappa-2}<\infty\}$. (We  set $k=\infty$ if the set in question is empty). The space has infinitely many atoms, and all of them
are of the form $F\cup\left[\bigcup_{i<k}F_i\right]$ where $F$ is some finite forest.\label{thmzo:sparse}
\end{enumerate}
\end{theorem}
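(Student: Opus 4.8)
The starting point is that, since every $d_i\le 1$, each new vertex is joined to at most one earlier vertex, so the process \emph{deterministically} produces a forest; moreover $v_i$ with $d_i=0$ opens a new component while $v_i$ with $d_i=1$ is absorbed into exactly one already-present component, and two components can never merge. Hence the number of components is always the number of zeros of $\{d_i\}$ (recall $d_0=0$) --- a deterministic quantity --- and the only randomness is the internal isomorphism type of the components. So in both parts it suffices to describe those types almost surely.

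For (i), fix a vertex $v_j$. For each $i>j$ with $d_i=1$ the event that $v_i$ attaches to $v_j$ has probability $1/i$, and these events are independent over $i$; since $\sum_{i>j,\,d_i=1}1/i=\sum_{i>j}d_i/i=\infty$ by hypothesis, the second Borel--Cantelli lemma makes infinitely many of them occur a.s., so $v_j$ has infinite degree. Intersecting over the countably many $j$, a.s.\ every vertex has infinite degree, so each component is a countable tree in which every vertex has infinite degree. A routine back-and-forth argument --- extend a finite partial isomorphism between finite connected subtrees one edge at a time, each time using a hitherto unused neighbour of the relevant vertex, of which there are always infinitely many --- shows any two such trees are isomorphic; thus each component is a.s.\ the $\omega$-tree. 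Therefore a.s.\ the graph is a disjoint union of $\omega$-trees, one per zero of $\{d_i\}$: a single isomorphism type, so the space is concentrated with the stated atom.

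For (ii), I would first show all components are a.s.\ finite: if $v_j$ is a root and $N_j(n)$ is the size of its component at time $n$, then $\mathbb E[N_j(n)\mid\text{state at }n-1]=N_j(n-1)(1+d_n/n)$, so $\mathbb E[N_j(\infty)]\le\prod_{i>j}(1+d_i/i)\le\exp\bigl(\sum_{i>j}d_i/i\bigr)<\infty$ by hypothesis, forcing $N_j<\infty$ a.s.\ for every $j$. Next, a union bound over roots and over the possible ``spanning structures'' a component can have shows that the expected number of components of size at least $s$ equals, up to a constant depending only on $s$, the series $\sum_l d_l\,t_{l+1}^{\,s-2}$ from the definition of $k$ with $\kappa=s$: given $s-1$ prescribed vertices to lie in the component of $v_j$, the dominant configurations are the recursive trees on those $s$ vertices, each of probability $\prod 1/(\text{indices})$, the other configurations being of lower order, and one simplifies using $x/2\le\log(1+x)\le x$ on $[0,1]$. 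Hence this expectation is finite exactly when $s\ge k$, so Markov's inequality and Borel--Cantelli give that a.s.\ only finitely many components have size $\ge k$; let $F$ be their union, a random finite forest (with any component of size $<k$ removed, as these are handled below).

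It remains to prove that for each tree $T$ with $|T|=s\le k-1$ the graph a.s.\ contains infinitely many components isomorphic to $T$; granting this, a.s.\ $G\cong F\cup\bigcup_{i<k}F_i$, so --- since $F$ ranges over a countable set --- the space is purely atomic with every atom of the asserted form, and outside degenerate cases it has infinitely many atoms (e.g.\ the number of components of size exactly $k$ takes every value in $\mathbb N$ with positive probability). The plan for the infinitude claim is: fix one recursive-tree realisation of $T$, and cut $\{1,2,\dots\}$ into consecutive blocks $I_1,I_2,\dots$ so that the contribution of each block to the divergent series $\sum_l d_l t_{l+1}^{s-2}$ (which, up to constants, also governs the expected number of components isomorphic to $T$, and diverges since $s<k$) lies in $[\tfrac12,1]$ --- possible because each individual root contributes only $O(t_{l+1}^{\,s-1})\to 0$. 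For each block let $C_r$ be the event that a root born in $I_r$ grows, using only attachments made within $I_r$, into a copy of $T$ and that no $1$-vertex of $I_r$ attaches to it otherwise; the $C_r$ involve disjoint families of attachment-choices, hence are independent, and a second-moment (Paley--Zygmund) estimate inside each block gives $\mathbb P(C_r)\ge c>0$, so infinitely many $C_r$ occur. Finally, the probability that some later $1$-vertex ever attaches to a fixed such copy is at most $s\sum_{i\ge\min I_{r+1},\,d_i=1}1/i\to 0$, so a further Borel--Cantelli argument leaves only finitely many of these copies spoiled, and infinitely many survive as genuine $T$-components. The hard part is exactly this tension: ``a component is isomorphic to $T$'' is a tail condition, not confined to a block, so the blocks must grow fast enough that the tail-attachment corrections vanish, while still matching the block-by-block expectation to the divergent series $\sum_l d_l t_{l+1}^{s-2}$ that defines $k$ --- reconciling the union-bound upper estimate with the block lower estimate, up to constants, is where the real work lies.
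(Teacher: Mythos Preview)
Your outline is sound and part~(i) matches the paper's argument (the paper packages it as Theorem~\ref{thm:degrees}/Lemma~\ref{lemma:hit}, but the content is the same Borel--Cantelli computation). Part~(ii), however, follows a genuinely different route from the paper in two places.

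\textbf{Finiteness of components.} Your supermartingale bound $\mathbb E[N_j(\infty)]\le\prod_{i>j}(1+d_i/i)<\infty$ is shorter and cleaner than what the paper does: the paper first proves local finiteness (Theorem~\ref{thm:degrees}) and then separately that there is a.s.\ no ray (Theorem~\ref{thm:noray}), invoking K\"onig's lemma to conclude. Your argument subsumes both at once.

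\textbf{Upper and lower bounds on component counts.} The paper does not count configurations directly. It sets $a(k)_i=\mathbb E[\#\text{size-}k\text{ components at time }i]$, derives the exact recursion of Proposition~\ref{prop:recursion}, and then grinds through Lemmas~\ref{lemma:recursion}--\ref{lemma:expectation} to get $a(k)_i\le K\sum_j d_jt_{j+1}^{k-2}$ and, when that series diverges, $a(k)_i\to\infty$. For the infinitude of size-$m$ components ($m<k$) the paper then computes $\Pr(E_i)=\frac{d_i(m-1)}{i}a(m-1)_{i-1}$ (create a size-$m$ component at step $i$) and the survival probability $q_i\to 1$, and shows $\sum_i\Pr(E_i)q_i=\infty$ via Lemma~\ref{lemma:recursion}; from this it argues $\Pr(G_n)=0$. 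Your block-decomposition plus Paley--Zygmund route avoids the whole recursive machinery of Lemmas~\ref{lemma:finitesum}--\ref{lemma:expectation} and manufactures genuinely independent events, which is conceptually attractive --- but the second-moment control inside a block is exactly the place where work comparable to the paper's Lemma~\ref{lemma:bothdiverge} would reappear, and you have not supplied it. Conversely, the paper's product formula $\Pr(G_n)=\prod_i(1-\Pr(E_i)q_i)$ is written as an equality although the events are not independent; so each approach leaves a step to be tightened, just a different one.

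One small point: ``infinitely many atoms'' does not follow just from ``$F$ ranges over a countable set''; you need that infinitely many choices of $F$ actually occur with positive probability. Your remark about the number of size-$k$ components taking every value is the right idea, but it needs a line of argument (e.g.\ exhibit, for each $n$, an explicit finite initial configuration with $n$ such components and positive probability of no further large components via Lemma~\ref{lemma:hit}). The paper is equally terse here.
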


Even though this theorem is not a complete description of the probability space,
it describes completely what the atoms are. The distinction of the sequences in
part~\ref{thmzo:sparse}) is extremely subtle, and the proof is very elaborate.
Nevertheless, we strived for clarity, and we divided the whole proof into small
lemmas, so by the time we are ready to prove the theorem, we can use the
machinery that will have been built up.

This theorem is the other major result of the paper, and arguably the more
difficult one.

\section{Non-concentrated spaces}\label{s:nonconcentrated}
It would perhaps be not completely na\"\i ve to think that something similar happens here
as in the Erd\H os--R\'enyi model. In this section we demonstrate that is
far from being correct. Therefore, we will show examples of non-concentrated
spaces.

The following proposition is actually about a very simple example of
\emph{concentration}, but we will use it as a tool to show non-concentration in
some other cases.

\begin{proposition}
The sequence $0,1,1,1,\ldots$ a.s.\ generates the $\omega$-tree.
\end{proposition}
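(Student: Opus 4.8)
The plan is to notice first that for this particular sequence the process is essentially deterministic in its qualitative shape: $v_0$ is the initial vertex, and for every $i\ge 1$ the vertex $v_i$ is joined to exactly one vertex, chosen uniformly at random from $\{v_0,\dots,v_{i-1}\}$. An immediate induction on $i$ shows that after round $i$ the graph on $\{v_0,\dots,v_i\}$ is a tree (it stays connected because the new vertex attaches to the one existing component, and stays acyclic because the new vertex is a leaf). Hence the graph $T$ produced in the limit is, with probability $1$ — indeed surely — a countably infinite tree, and the only thing the randomness can influence is the degree sequence.

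Since the $\omega$-tree is by definition the unique countable tree all of whose vertices have infinite degree (uniqueness being the usual back-and-forth argument), it therefore suffices to prove that a.s.\ every vertex of $T$ has infinite degree. Fix $j\ge 0$. For each $i>j$ let $E_i$ be the event that $v_i$ is joined to $v_j$; by construction $\Pr(E_i)=1/i$, and the events $\{E_i\}_{i>j}$ are mutually independent, because the choices made in different rounds are independent. As $\sum_{i>j}1/i=\infty$, the divergence (second) Borel--Cantelli lemma gives that a.s.\ infinitely many of the $E_i$ occur, i.e.\ $\deg_T(v_j)=\infty$ almost surely.

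Finally, the event ``$\deg_T(v_j)=\infty$ for every $j\ge 0$'' is a countable intersection of events of probability $1$, hence itself has probability $1$. On this event $T$ is a countable tree in which every vertex has infinite degree, so $T$ is isomorphic to the $\omega$-tree, which is what we wanted. The only point genuinely requiring care is the independence of the rounds, which is what licenses the use of the divergence half of Borel--Cantelli (rather than just the easy convergence half); the remaining steps are routine bookkeeping and the standard uniqueness of the $\omega$-tree.
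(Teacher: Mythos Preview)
Your proof is correct. The paper does not actually prove this proposition in place; it simply forward-references Theorem~\ref{thm:zo}, part~\ref{thmzo:dense}), which (via Theorem~\ref{thm:degrees} and Lemma~\ref{lemma:hit}) shows in the general setting that when $\sum d_i/i=\infty$ every vertex a.s.\ has infinite degree. Your argument is precisely the specialization of that reasoning to the sequence $0,1,1,1,\ldots$: where the paper phrases the key step as $\prod_{i>j}(1-1/i)=0$ via the sum/product equivalence, you invoke the second Borel--Cantelli lemma using the independence of the rounds, which is an equivalent (and slightly more informative) formulation. The additional observation you make explicitly---that the resulting graph is \emph{surely} a tree, so only the degree sequence is random---is exactly what reduces the problem to the degree computation, and the paper handles the analogous reduction in the general case by counting the zeroes of the sequence.
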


\begin{proof}
We will prove a more general statement later, see Theorem~\ref{thm:zo}.
\end{proof}

\begin{corollary}
Consider a sequence of the form $d_0,d_1,\ldots,d_k,1,1,1,\ldots$. Let
$G_1,\ldots,G_l$ be the set of finite nonisomorphic graphs on $v_0,\ldots,v_k$
that is possible to be generated by the process using $d_1,\ldots,d_k$. For
each $i$, let
$G_i'$ be the graph constructed from $G_i$ by
attaching an $\omega$-tree to every vertex. Then the graphs $G_1',\ldots,G_l'$
are the atoms of the space, with the probabilities are inherited from the
finite part of the process.
\end{corollary}

The corollary above shows that it is easy to construct a sequence whose
associated probability space is not concentrated. E.g. 0,1,2,1,2,1,1,1,\dots
However these examples are very special in the sense that they are
eventually all 0's and 1's, so after that point no more cycles are generated.
Nevertheless, the following proposition shows that non-concentrated probability
spaces can be found for other kind of sequences.

\begin{proposition}
There exists a sequence $\{d_i\}$ with non-concentrated
probability space such that for all positive integers $N$ there exists $n>N$
such that $a_n\neq 0$ and $a_n\neq 1$. 
\end{proposition}

\begin{proof}
We will construct a sequence consisting mostly of 1's, but infinitely many 2's
inserted. The sequence starts with $0,1,1,2$. We set $p_0=2/3$, and we note
that $p_0$ is the probability that the first $4$ vertices include a triangle.
Then let $k$ be the least integer such that $k/\binom{k}{2}<3/4-p_0$. Set
$d_4=\cdots=d_{k-1}=1$, and $d_k=2$. Note, that the probability that a
triangle is generated by $v_k$ is $p_1:=k/\binom{k}{2}$. In general, after
the $l$th $2$ in the sequence, let $k$ be a sufficiently large integer for
which $d_k$ is not yet defined and
\[
\frac{k+l-1}{\binom{k}{2}}<\frac{3}{4}-\sum_{i=0}^{l-1}p_i.
\]
Set $d_k=2$ and set all the elements before
$d_k$ that are not yet defined to be $1$.
Note that the probability that a triangle is generated at
$v_k$ equals $p_l=\frac{k+l-1}{\binom{k}{2}}$.
Let $X$ be the random variable that denotes the number of triangles eventually
generated in $G$. Due to the linearity of expectation,
\[
\mu=E[X]=  \sum_{i=0}^\infty p_i.
\]
Clearly, from the definition of the sequence $2/3\leq \mu\leq 3/4$. That means
that
\[
\Pr[X=0]>0\quad\text{and}\quad\Pr[X>0]>0.
\]
The sets $[X=0]$ and $[X>0]$ partition the probability space, and neither of
them are of measure $0$, so the space can not be concentrated.
\end{proof}

\section{The Rado graph}\label{s:rado}

This section contains the proof of Theorem~\ref{thm:rado}, and
Corollary~\ref{cor:min}, with some additional discussion of some consequences.
We will make a frequent use of the following basic fact relating infinite products to infinite sums.
\begin{proposition}
Let $\{b_i\}_{i=0}^{\infty}$ be a sequence of real numbers such
that $0 < b_i <1$ and $\{d_i\}$ be a sequence of nonnegative integers. Then,
\[
0 < \prod_{i=1}^{\infty} (1-b_i)^{d_i}  \iff \sum _{i=1}^{\infty} d_ib_i<  \infty
\]
\end{proposition}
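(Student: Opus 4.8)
The plan is to pass from the power $(1-a_i)^{d_i}$ to the single linear factor $1-d_ia_i$ by a two-sided elementary estimate, and then reduce to the classical fact that an infinite product $\prod(1+b_i)$ with $b_i\ge 0$ converges to a finite limit if and only if $\sum b_i<\infty$. Throughout I write $b_i=d_ia_i$. First I would dispose of the indices with $d_i=0$: such a term contributes the factor $1$ to the product and $0$ to the sum, so we may assume $d_i\ge 1$ for all $i$ under consideration.

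Next I would establish the key pointwise inequality: for every integer $d\ge 1$ and every $a\in(0,1)$,
\[
1-da\le (1-a)^d\le \frac{1}{1+da}.
\]
The left-hand inequality is Bernoulli's inequality. For the right-hand one, put $f(a)=(1-a)^d(1+da)$; then $f(0)=1$ and $f'(a)=-d(d+1)\,a\,(1-a)^{d-1}\le 0$ on $[0,1]$, so $f(a)\le 1$, which rearranges to the claim. Multiplying the right-hand inequality over all $i$ gives $\prod_i (1-a_i)^{d_i}\le \bigl(\prod_i(1+b_i)\bigr)^{-1}$, so if $\sum b_i=\infty$ then $\prod_i(1+b_i)=\infty$ — this follows from $\prod_{i\le N}(1+b_i)\ge 1+\sum_{i\le N}b_i$ — and hence $\prod_i(1-a_i)^{d_i}=0$. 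Conversely, if $\sum b_i<\infty$ then $b_i\to 0$, so there is $N$ with $b_i\le \tfrac12$ for $i\ge N$; for such $i$ the elementary bound $1-b_i\ge(1+2b_i)^{-1}$ holds (equivalently $b_i(1-2b_i)\ge 0$), so the left-hand inequality above yields
\[
\prod_{i\ge N}(1-a_i)^{d_i}\ \ge\ \prod_{i\ge N}(1-b_i)\ \ge\ \Bigl(\prod_{i\ge N}(1+2b_i)\Bigr)^{-1}\ >\ 0,
\]
the last product being finite since $\sum 2b_i<\infty$ and $1+x\le e^x$. As the remaining finitely many factors $(1-a_i)^{d_i}$ are strictly positive, $\prod_{i\ge 1}(1-a_i)^{d_i}>0$. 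Combining the two directions is exactly the asserted equivalence.

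The only points requiring care — and the nearest thing to an obstacle here — are that $a_i$ need not tend to $0$, so one must run the small-argument step through $b_i=d_ia_i\to 0$ rather than through $a_i$ itself, and that the lower bound $(1-a_i)^{d_i}\ge 1-b_i$ is usable only once $b_i<1$, i.e.\ on a tail, the finitely many leading factors being harmless because each lies in $(0,1]$. One could instead argue via logarithms, using that $0<\prod(1-a_i)^{d_i}$ iff $\sum d_i\bigl(-\log(1-a_i)\bigr)<\infty$ together with $a\le-\log(1-a)\le 2a$ on $(0,\tfrac12]$; the product-form argument above has the mild advantage of staying entirely within elementary algebraic inequalities.
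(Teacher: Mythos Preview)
Your argument is correct. The paper does not actually prove this proposition: it is stated there as a ``basic fact relating infinite products to infinite sums'' and then used without further justification. Your proof via the two-sided estimate $1-da\le(1-a)^d\le(1+da)^{-1}$ together with the tail reduction through $b_i=d_ia_i$ is a clean, self-contained way to supply the missing details; the logarithmic alternative you mention at the end would work equally well and is closer to how the fact is usually quoted in textbooks.
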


We start with a simple technical lemma.

\begin{lemma}\label{lemma:limitcmp}
Fix a nonnegative integer $k$. The infinite series
\[
\sum_{n}^\infty\left(\frac{d_n}{n}\right)^k\left(\frac{n-d_n}{n}\right)^k
\qquad\text{and}\qquad
\sum_{n}^\infty\frac{(d_n)_{(k)}(n-d_n)_{(k)}}{(n)_{(2k)}}
\]
either both converge or both diverge.
\end{lemma}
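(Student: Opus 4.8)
Throughout write $a_n=d_n/n\in[0,1]$, so that $(n-d_n)/n=1-a_n$ and the $n$-th term of the first series equals $\bigl(a_n(1-a_n)\bigr)^{k}$. First I dispose of the case $k=0$: then both series are $\sum_n 1$, and both diverge, so from now on assume $k\ge 1$. There are only finitely many indices with $n<2k$, and for those the denominator $(n)_{(2k)}$ vanishes; I simply drop these finitely many terms from both sums, since doing so changes the convergence of neither series. Every remaining index thus satisfies $n\ge 2k$. The plan is to split these indices into the ``generic'' set $N_{\mathrm g}=\{\,n:\ k\le d_n\le n-k\,\}$ and its complement $N_{\mathrm b}$, to dispatch $N_{\mathrm b}$ with a crude bound, and to show that on $N_{\mathrm g}$ the two $n$-th terms agree up to multiplicative constants depending only on $k$.

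For $n\in N_{\mathrm b}$ we have $d_n\le k-1$ or $n-d_n\le k-1$, so one of the falling factorials $(d_n)_{(k)}$, $(n-d_n)_{(k)}$ is $0$; hence the second series has only zero terms on $N_{\mathrm b}$. The same case distinction gives $\min\{a_n,1-a_n\}\le(k-1)/n$, whence $a_n(1-a_n)\le(k-1)/n$ and the first term is at most $\bigl((k-1)/n\bigr)^{k}\le(k-1)^{k}n^{-k}$. Since $\sum_{n\ge 2k}n^{-k}<\infty$ when $k\ge 2$, and this bound is identically $0$ when $k=1$, the restriction of the first series to $N_{\mathrm b}$ converges as well. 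Consequently both full series converge if and only if their restrictions to $N_{\mathrm g}$ do.

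The heart of the argument is the comparison on $N_{\mathrm g}$, which rests on one elementary estimate: for an integer $m\ge k\ge 1$,
\[
\frac{m_{(k)}}{m^{k}}=\prod_{j=0}^{k-1}\Bigl(1-\frac{j}{m}\Bigr)\in\Bigl[\,\frac{k!}{k^{k}},\ 1\,\Bigr],
\]
the lower bound holding because $1-j/m\ge 1-j/k$ whenever $m\ge k$; likewise $(n)_{(2k)}/n^{2k}=\prod_{j=0}^{2k-1}(1-j/n)\in[(2k)!/(2k)^{2k},\,1]$ for $n\ge 2k$. For $n\in N_{\mathrm g}$ both $d_n$ and $n-d_n$ are at least $k$, so applying the first estimate with $m=d_n$ and with $m=n-d_n$ and combining with the second estimate yields
\[
\Bigl(\frac{k!}{k^{k}}\Bigr)^{2}\le\frac{(d_n)_{(k)}}{d_n^{k}}\cdot\frac{(n-d_n)_{(k)}}{(n-d_n)^{k}}\cdot\frac{n^{2k}}{(n)_{(2k)}}\le\frac{(2k)^{2k}}{(2k)!}.
\]
Since $\bigl(a_n(1-a_n)\bigr)^{k}=d_n^{k}(n-d_n)^{k}/n^{2k}$, the middle expression is precisely the ratio of the $n$-th term of the second series to the $n$-th term of the first. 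Hence on $N_{\mathrm g}$ these two series of positive terms dominate one another up to the fixed positive constants $(k!/k^{k})^{2}$ and $(2k)^{2k}/(2k)!$; by the comparison test they converge or diverge together, and combined with the previous paragraph this proves the lemma.

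The only step where genuine care is needed is the bookkeeping of the degenerate indices --- those with $n<2k$, and those with $d_n\notin[k,n-k]$ --- where the factorial side carries ``extra zeros'' not matched on the power side; the point, carried out above, is that on exactly those indices the power side is itself the tail of a convergent series, so nothing essential is lost. Once the index set has been partitioned correctly, the estimate on $N_{\mathrm g}$ is merely the product of three two-sided bounds of the shape $\prod_{j}(1-j/m)\in[\mathrm{const},1]$, and is entirely routine.
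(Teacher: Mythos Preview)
Your proof is correct and follows essentially the same route as the paper: partition the index set into the ``degenerate'' indices where $d_n<k$ or $n-d_n<k$ (your $N_{\mathrm b}$, the paper's $A\cup B$) and the ``generic'' complement, show both series converge on the degenerate part, and compare term ratios on the generic part. The only cosmetic differences are that you produce explicit uniform constants $(k!/k^k)^2$ and $(2k)^{2k}/(2k)!$ where the paper merely invokes a lim~inf/lim~sup comparison, and you fold the case $k=1$ into the general argument rather than dismissing it as trivial.
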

\begin{proof}
If $k\leq 1$ then the statement is trivial. If $k\geq 2$, then partition the
terms into three parts: $A=\{i: d_i<k\}$, $B=\{i:n-d_i<k\}$, and
$C=\mathbb{N}\setminus (A\cup B)$. It is clear that over the terms indexed by
$A$ and $B$, both series converge, so the behavior is decided by the terms over
$C$. For those we use a generalized limit comparison test, and show that the
lim inf and lim sup of the ratio of the terms are positive and finite.

To see this last statement, notice that
\[
1\leq\frac{d_n}{d_n},\frac{d_n}{d_{n-1}},\ldots,\frac{d_n}{d_n-k+1}\leq k
\]
so
\[
1\leq\frac{(d_n)^k}{(d_n)_{(k)}}\leq k^k.
\]
A similar statement can be made about $\frac{(n-d_n)^k}{(n-d_n)_{(k)}}$, so we
see that the lim inf of ratio of the terms is at least $1$, and the lim sup is
at most $k^{2k}$.
\end{proof}

\subsection{Proof of Theorem~\ref{thm:rado}}

Note that $k_1=k_2$ is a consequence of Lemma~\ref{lemma:limitcmp}. We will denote this number by $k$, and we will go back and forth between its two equivalent
definitions at our convenience.

Now we prove that the graph generated is almost surely $k$-Rado.

The statement is trivial for $k=0$. Let $A,B$ be two finite disjoint vertex sets with $|A|=|B|=k\geq 1$, and let
$N$ be a positive integer. It is sufficient
to show that the pair $(A,B)$ has a witness with probability $1$ among the
vertices $v_N,v_{N+1},\ldots$.

For a given vertex $v_n$, let $p_n$ be the probability that $v_n$ is a witness
for $(A,B)$.
Now pick a vertex $v_n$ such that $n>\max\{i:v_i\in A\cup B\}$ and $n\geq N$.
Then
\[
p_n=\frac{\binom{n-2k}{d_n-k}}{\binom{n}{d_n}}
=\frac{(d_n)_{(k)}(n-d_n)_{(k)}}{(n)_{(2k)}}.
\]
Note that this holds whether $d_n\geq k$ or $d_n<k$; in the latter case $p_n=0$.
Hence
$\sum_{n=N}^\infty p_n$ diverges, and
then $\prod_{n=N}^\infty(1-p_n)=0$, which is the probability that the
pair $(A,B)$ has no witness beyond (including) $v_N$.

It remains to be proven that if $k<\infty$, then a.s \ the graph is not
$k+1$-Rado. It suffices to prove that there is a pair $(A,B)$ of finite
disjoint vertex sets with $|A|=|B|=k+1$ such that a.s.\ $(A,B)$ has finitely
many witnesses. Indeed, we prove that this is the case for every such 
pair of vertex sets $(A, B)$. To obtain a contradiction suppose that this is not true: 
that is there are disjoint sets $A, B$ of
vertices with $|A|=|B|=k+1$ and the probability that $(A,B)$ has finitely many
witnesses is $p <1$.  Let $q_N$ be the probability
that $(A,B)$ has no witness beyond (including) $v_N$. We note
that \begin{equation}\label{eq:contra}
q_N\leq p\text{ for all }N.
\end{equation}

On the other hand, similarly as above, the probability that a given vertex
$v_n$ is a witness for $(A,B)$ (if $n$ is large enough) is
\[
p_n=\frac{(d_n)_{(k+1)}(n-d_n)_{(k+1)}}{(n)_{(2(k+1))}}.
\]
This time, we know that $\sum p_n<\infty$, so $\prod (1-p_n)>0$. Hence there
exists $N$ such that
\[
q_N=\prod_{n=N}^\infty(1-p_n)>p.
\]
But this contradicts (\ref{eq:contra}).
\qed

\subsection{Proof of Corollary~\ref{cor:min}}
Suppose that $\sum_{n=1}^\infty a_n^k$ diverges for all $k$. Since
\[
\sum_{n=1}^\infty\left(\frac{d_n}{n}\right)^k\left(\frac{n-d_n}{n}\right)^k
\geq\sum_{n=1}^\infty a_n^{2k},
\]
we get that
\[
\sum\left(\frac{d_n}{n}\right)^k\left(\frac{n-d_n}{n}\right)^k
\]
diverges for all $k$, and therefore we get a.s.\ $\rado(G)=\infty$.

Now suppose that there is a positive integer $k$ for which $\sum_{n=1}^\infty a_n^k$
converges. Since
\[
a_n^k\geq\left(\frac{d_n}{n}\right)^k\left(\frac{n-d_n}{n}\right)^k
\geq
\prod_{i=0}^{k-1}\frac{d_n-i}{n}\cdot\frac{n-d_n-i}{n},
\]
we have that for large enough $n_0$,
\begin{multline*}
\sum_{n=n_0}^\infty a_n^k\cdot 2^{2k}
\geq
\sum_{n=n_0}^\infty\left(
\prod_{i=0}^{k-1}\frac{d_n-i}{n}\cdot\frac{n-d_n-i}{n}
\prod_{i=0}^{2k-1}\frac{n}{n-i}
\right)\\
=
\sum_{n=n_0}^\infty\frac{(d_n)_{(k)}(n-d_n)_{(k)}}{(n)_{(2k)}},
\end{multline*}
and therefore the last sum converges. Thus the graph a.s.\ has finite radocity.
\qed

\begin{corollary}\label{cor:limsup}
Let $a_n=\min\{\frac{d_n}{n},\frac{n-d_n}{n}\}$. If $\limsup a_n>0$, then the process a.s.\ generates the Rado graph.
\end{corollary}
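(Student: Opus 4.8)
The plan is to reduce Corollary~\ref{cor:limsup} to Corollary~\ref{cor:min}(i) by showing that $\limsup a_n > 0$ forces $\sum_{n=1}^\infty a_n^k = \infty$ for every positive integer $k$. Suppose $\limsup a_n = c > 0$. Then for every $\varepsilon \in (0,c)$ there are infinitely many indices $n$ with $a_n > c - \varepsilon$; fix $\varepsilon = c/2$, so that there is an infinite set $S \subseteq \mathbb{N}$ with $a_n > c/2$ for all $n \in S$. For any fixed $k$ we then have
\[
\sum_{n=1}^\infty a_n^k \ge \sum_{n \in S} a_n^k \ge \sum_{n \in S} \left(\frac{c}{2}\right)^k = \infty,
\]
since the last sum is an infinite sum of a fixed positive constant. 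Thus the hypothesis of Corollary~\ref{cor:min}(i) is satisfied for every $k$, and the process almost surely generates the Rado graph.

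The one point requiring a word of care is the degenerate convention $0^0 = 1$ that the paper adopts for series terms: if $k = 0$ were allowed the argument would be vacuous, but since we only need $k$ ranging over positive integers this is not an issue, and for $n \in S$ we have $a_n > c/2 > 0$ so no term is actually $0^0$. Everything else is the trivial observation that a series with infinitely many terms bounded below by a positive constant diverges, together with a direct appeal to the already-proved Corollary~\ref{cor:min}. I do not anticipate any genuine obstacle here; the result is an immediate corollary in the literal sense, and the only thing to get right is phrasing the ``infinitely many $n$'' extraction from $\limsup a_n > 0$ cleanly.

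Since this is so short, I would simply write it out in full rather than as a sketch: state that $c = \limsup a_n > 0$, extract the infinite index set $S$ on which $a_n > c/2$, note that $\sum_{n \in S} (c/2)^k = \infty$ for each fixed positive integer $k$ hence $\sum_n a_n^k = \infty$ for all $k$, and conclude by Corollary~\ref{cor:min}(i).
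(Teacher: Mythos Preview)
Your proof is correct and is precisely the argument the paper intends: the corollary is stated in the paper without proof, as an immediate consequence of Corollary~\ref{cor:min}(i), and your extraction of an infinite set $S$ with $a_n>c/2$ together with the divergence of $\sum_{n\in S}(c/2)^k$ is exactly the routine verification that makes this immediate.
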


\begin{proof}
Direct consequence of Corollary~\ref{cor:min}.
\end{proof}

The \emph{double random process} is when we even chose the sequence in random,
choosing $d_i$ with some distribution from the interval $[0,i]$. Note that
Janson and Severini \cite{Jan-Sev-13} study the double random process from a
different point of view. The following corollary states that in some sense,
almost all double random processes will result in the Rado graph.

\begin{corollary}\label{cor:dblrnd1}
If there exist $\epsilon>0$, $p_0>0$, and $M$ integer such that for $n>M$, $\Pr[\epsilon
n\leq d_n\leq (1-\epsilon)n]\geq p_0$, then the double random process
a.s.\ generates the Rado graph.
\end{corollary}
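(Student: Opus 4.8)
The plan is to deduce Corollary~\ref{cor:dblrnd1} from Corollary~\ref{cor:min}(i) by showing that, with probability one, the randomly chosen sequence $\{d_n\}$ satisfies the divergence hypothesis $\sum_n a_n^k=\infty$ for every positive integer $k$, where $a_n=\min\{d_n/n,(n-d_n)/n\}$. Once that is established, we condition on the realized sequence and apply Corollary~\ref{cor:min}(i) to the (single-)random process driven by that sequence; since the conclusion ``generates the Rado graph'' then holds for almost every sequence, Fubini gives that the double random process generates the Rado graph almost surely.

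To verify the divergence almost surely, fix $k\ge 1$. The key observation is that on the event $E_n=\{\epsilon n\le d_n\le(1-\epsilon)n\}$ we have $a_n\ge\epsilon$ (for $n>M$), hence $a_n^k\ge\epsilon^k$. Therefore $\sum_n a_n^k\ge\epsilon^k\sum_{n>M}\mathbf 1_{E_n}$. The hypothesis says $\Pr[E_n]\ge p_0>0$ for all $n>M$. If the $d_n$ were independent, the second Borel--Cantelli lemma would immediately give $\sum_n\mathbf 1_{E_n}=\infty$ a.s.\ (indeed infinitely many $E_n$ occur, and a little more care gives the sum of indicators diverges). For the double random process the choices of $d_n$ across rounds are presumably independent (the distribution in round $n$ does not depend on earlier outcomes), so Borel--Cantelli applies directly; if one wishes to allow dependence, one uses instead the fact that $\sum_{n>M}(\mathbf 1_{E_n}-\Pr[E_n\mid\mathcal F_{n-1}])$ is a martingale with a.s.\ convergent or oscillating behavior, so $\sum\mathbf 1_{E_n}$ and $\sum\Pr[E_n\mid\mathcal F_{n-1}]\ge\sum p_0=\infty$ diverge together on the relevant event. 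Either way, $\sum_n\mathbf 1_{E_n}=\infty$ almost surely, hence $\sum_n a_n^k=\infty$ almost surely.

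Since this holds for each fixed $k$ and there are only countably many $k$, a countable intersection of probability-one events gives that almost surely $\sum_n a_n^k=\infty$ for \emph{all} positive integers $k$ simultaneously. On that event, Corollary~\ref{cor:min}(i) applies to the sequence and yields the Rado graph with conditional probability one; integrating over the sequence randomness finishes the proof.

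The main obstacle, such as it is, is purely bookkeeping: making the two layers of randomness precise, i.e.\ setting up the product probability space for the double random process and invoking Fubini/Tonelli to pass from ``for a.e.\ sequence, the graph is a.s.\ Rado'' to ``the graph is a.s.\ Rado.'' The probabilistic content is light --- essentially one application of Borel--Cantelli --- and the inequality $a_n\ge\epsilon$ on $E_n$ is the only estimate needed. No genuinely hard step is expected; the value of the statement is conceptual rather than technical.
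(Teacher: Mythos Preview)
Your argument is correct and is essentially the paper's own approach spelled out in more detail: by second Borel--Cantelli the events $E_n=\{\epsilon n\le d_n\le (1-\epsilon)n\}$ occur infinitely often a.s., so $a_n\ge\epsilon$ infinitely often, and the Rado conclusion follows. The only cosmetic difference is that the paper routes this through Corollary~\ref{cor:limsup} (``$\limsup a_n>0$ a.s.'') rather than appealing to Corollary~\ref{cor:min}(i) directly, but the content is identical.
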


\begin{proof}
It is easy to see that Corollary~\ref{cor:limsup} is a.s.\ satisfied.
\end{proof}

\section{Density, sparsity, degrees, and stars}\label{s:density}

The main goal of this section is to analyze how certain ``density'' conditions
on the sequence will affect the resulting graph. One important result from this
section (Theorem~\ref{thm:degrees}) will also be used in
Section~\ref{s:zeroone} to analyze zero--one sequences.

For the rest of the section, we will use the notation $s_n=\sum_{i=0}^n d_i$,
the partial sum of the sequence $\{d_i\}$.

It will be useful to distinguish sequences based on convergence of certain
partial sums. When $\sum d_i/i=\infty$, we will refer to this situation as the
``dense'' case. The opposite case, when $\sum d_i/i<\infty$, will be called the
``sparse'' case. A subcase of the sparse case, when even $\sum s_i
d_i/i<\infty$, will be called the ``very sparse case''.

We begin with a simple proposition on binomial coefficients.

\begin{proposition}\label{prop:combi}
Let $n,d,m\geq 0$ integers with 
$\frac{m}{n-d}\leq 1$. Then
\[
\left(1-\frac{m}{n-d}\right)^d
\leq
\frac{\binom{n-m}{d}}{\binom{n}{d}}
\leq 
\left(1-\frac{m}{n}\right)^d.
\]
\end{proposition}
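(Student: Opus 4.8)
The plan is to reduce everything to the product representation
\[
\frac{\binom{n-m}{d}}{\binom{n}{d}}=\prod_{i=0}^{d-1}\frac{n-m-i}{n-i}=\prod_{i=0}^{d-1}\left(1-\frac{m}{n-i}\right)
\]
and then to bound the $i$-th factor, above and below, by quantities independent of $i$. If $d=0$ all three expressions in the proposition equal $1$ (this is the only place the convention $0^0=1$ is used), so I may assume $d\ge 1$; if $m=0$ every factor equals $1$ and there is nothing to prove, so I may also assume $m\ge 1$. Under these assumptions the hypothesis $m/(n-d)\le 1$ is only meaningful when $n-d\ge 1$, which I therefore take for granted; then $n-i\ge n-d+1\ge 1$ for all $i\in\{0,\dots,d-1\}$, so every denominator above is positive, and since $n-m-i\ge (n-d)-m+1\ge 1$ as well, every factor lies in $(0,1]$.

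The core of the argument is the chain of inequalities, valid for $0\le i\le d-1$,
\[
0\le 1-\frac{m}{n-d}\le 1-\frac{m}{n-i}\le 1-\frac{m}{n}.
\]
The leftmost inequality is exactly the hypothesis, and the middle two follow because $x\mapsto m/x$ is decreasing on $(0,\infty)$ while $0<n-d\le n-i\le n$ (equivalently, after clearing the positive denominators, from $0\le im$ and $0\le (d-i)m$). Since both endpoints $1-\frac{m}{n-d}$ and $1-\frac{m}{n}$ are nonnegative, I can multiply the $d$ copies of this chain (for $i=0,\dots,d-1$) term by term: the product of the middle terms is precisely the ratio of binomial coefficients, and the products of the two constant endpoints are $\left(1-\frac{m}{n-d}\right)^d$ and $\left(1-\frac{m}{n}\right)^d$. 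This yields the asserted two-sided bound.

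There is no serious obstacle here — the estimate is essentially one line once the product form is written down. The only thing that needs genuine care is the degenerate bookkeeping: checking that the hypothesis precludes division by zero (which is what forces $n-d\ge 1$ in the nontrivial range $m\ge 1$), noting that the borderline case $m=n-d$, where the lower bound collapses to $0$, is harmless, and observing that multiplying the inequalities term by term is legitimate precisely because the common lower bound $1-\frac{m}{n-d}$ is nonnegative. Isolating the cases $d=0$ and $m=0$ at the very start keeps all of this clean.
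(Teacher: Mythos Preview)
Your proof is correct and follows essentially the same route as the paper: write the ratio as the product $\prod_{i=0}^{d-1}\bigl(1-\frac{m}{n-i}\bigr)$ and bound every factor by the extreme ones at $i=0$ and $i=d-1$. Your additional bookkeeping for the degenerate cases $d=0$, $m=0$, and the positivity of denominators is more careful than the paper's one-line version, but the underlying argument is identical.
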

\begin{proof}We note that
\[
\frac{\binom{n-m}{d}}{\binom{n}{d}}=\frac{(n-m)_{(d)}}{(n)_{(d)}}=\prod_{i=0}^{d-1}\left(1-\frac{m}{n-i}\right).
\]
Then bound the product by replacing all factors with the largest factor, and
then with the smallest factor to obtain the desired inequality.
\end{proof}

\begin{lemma}\label{lemma:hit}
Let $v_k$ be a vertex.
\begin{enumerate}[i)]
\item If $\sum d_i/i=\infty$ then for all $N>k$, a.s.\ $v_k$ has a neighbor
beyond
$v_N$.
\item If $\sum d_i/i<\infty$ then there exists $M$ such that with positive
probability $v_k$ has no neighbor beyond $v_M$; furthermore, for all $\epsilon>0$
there exists an $M'\geq M$ such that $\Pr(v_k\text{ has a neighbor beyond
}v_{M'})<\epsilon$.
\end{enumerate}
\end{lemma}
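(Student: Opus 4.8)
The plan is to compute, for each vertex $v_n$ with $n > i_k$, the probability $p_n$ that $v_n$ is adjacent to at least one vertex of $V$, and then apply the infinite product/sum dichotomy from the Proposition relating $\prod(1-a_i)^{d_i}$ to $\sum d_i a_i$. Since $v_n$ chooses a uniformly random $d_n$-subset of $\{v_0,\ldots,v_{n-1}\}$, the probability that $v_n$ misses all of $V$ is $\binom{n-k}{d_n}/\binom{n}{d_n}$, so
\[
p_n = 1 - \frac{\binom{n-k}{d_n}}{\binom{n}{d_n}}.
\]
The events ``$v_n$ is adjacent to some vertex of $V$'' are independent across $n > i_k$, so the probability that $V$ has \emph{no} neighbor among $v_{N+1},v_{N+2},\ldots$ equals $\prod_{n>N} \bigl(1-p_n\bigr) = \prod_{n>N} \binom{n-k}{d_n}/\binom{n}{d_n}$.

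First I would handle part~(i). By Proposition~\ref{prop:combi} with $m=k$ we have $\binom{n-k}{d_n}/\binom{n}{d_n} \le (1 - k/n)^{d_n}$, hence $p_n \ge 1 - (1-k/n)^{d_n} \ge$ a positive constant times $\min\{1, d_n k/n\}$; in any case $\sum_n p_n \ge c\sum_n d_n/n$ for the indices where $d_n/n$ is small, so $\sum d_i/i = \infty$ forces $\sum p_n = \infty$, and therefore $\prod_{n>N}(1-p_n) = 0$ for every $N$. Thus a.s.\ $V$ has a neighbor beyond $v_N$. For part~(ii), the lower bound in Proposition~\ref{prop:combi} gives $\binom{n-k}{d_n}/\binom{n}{d_n} \ge (1 - k/(n-d_n))^{d_n}$; once $n$ is large (say $n > 2k$, so that $d_n \le n$ forces $n - d_n \ge $ something comparable to $n$ when $d_n$ is bounded, and more carefully one splits off the finitely many $n$ with $d_n$ close to $n$), this is bounded below by $(1 - 2k/n)^{d_n}$, and since $\sum d_i/i < \infty$ the Proposition yields $\prod_{n > M}(1 - 2k/n)^{d_n} > 0$ for $M$ large enough, hence $\prod_{n>M}(1-p_n) > 0$; that is exactly the statement that with positive probability $V$ has no neighbor beyond $v_M$. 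The ``furthermore'' clause follows because $\prod_{n>M'}(1-p_n) \to 1$ as $M' \to \infty$ (tail of a convergent product), so $\Pr(V \text{ has a neighbor beyond } v_{M'}) = 1 - \prod_{n>M'}(1-p_n) \to 0$.

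The main obstacle is purely bookkeeping: making Proposition~\ref{prop:combi} applicable uniformly requires controlling the denominator $n - d_n$, which can be as small as $0$. But there are only finitely many indices $n$ with $n - d_n < k$ (for any fixed $k$) are irrelevant to convergence, and for the divergence direction in~(i) one can simply note $p_n \ge$ a constant (indeed $p_n = 1$ or close to it) on the indices where $d_n$ is large, so these help rather than hurt. Once the finitely many bad indices are discarded, everything reduces to the clean product/sum criterion, so the argument is short. No back-and-forth is needed here; the independence of the rounds does all the work.
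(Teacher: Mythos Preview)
Your argument is correct and follows essentially the same route as the paper: compute $\Pr(v_n\not\sim V)=\binom{n-k}{d_n}/\binom{n}{d_n}$, sandwich it via Proposition~\ref{prop:combi}, and invoke the product/sum dichotomy. The paper streamlines your part~(i) by bounding the product $\prod_{n\ge l}\binom{n-k}{d_n}/\binom{n}{d_n}\le\prod_{n\ge l}(1-k/n)^{d_n}$ directly (avoiding your detour through $\sum p_n$), and in part~(ii) it makes your ``split off the bad indices'' step precise by choosing $M$ so that $d_i/i\le 1/2$ for $i\ge M$ (which is legitimate since $\sum d_i/i<\infty$ forces $d_i/i\to 0$), giving $k/(i-d_i)\le 2k/i$ and hence $\sum d_i\cdot k/(i-d_i)\le 2k\sum d_i/i<\infty$.
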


\begin{proof}
Let $E_l$ be the event that $v_k$ has no neighbor beyond $v_l$. We will estimate
the probability of $E_l$. For any $i>k$, we have
$\Pr(v_i\not\sim v_k)={\binom{i-1}{d_i}}/{\binom{i}{d_i}}$, so
$\Pr(E_l)=\prod_{i=l}^\infty {\binom{i-1}{d_i}}/{\binom{i}{d_i}}$.

Using
Proposition~\ref{prop:combi}, we have
\[
\Pr(E_l)\leq
\prod_{i=l}^\infty\left(1-\frac{1}{i}\right)^{d_i}.
\]
If $\sum d_i/i=\infty$, then the product
on the right hand side is zero. Thus, we have that $\Pr(E_l)=0$ for all
$l>i_k$, which proves the first part. 

If $\sum
d_i/i<\infty$, then there exists an $M$ such that for all $i\geq M$, $d_i/i\leq
1/2$,
and $1/(i-d_i)\leq 1$. Then we may use the other part of Proposition~\ref{prop:combi} to get
\begin{equation}
\Pr(E_M)\geq\prod_{i=M}^\infty
\left(1-\frac{1}{i-d_i}\right)^{d_i}.\label{eq:lower}
\end{equation}
Also, in this case,
\[
\sum_{i=M}^\infty \frac{d_i}{i-d_i}=\sum_{i=M}^\infty
\frac{1}{1-d_i/i}\cdot\frac{d_i}{i}\leq\sum_{i=M}^\infty 2\cdot\frac{d_i}{i}<\infty,
\]
so $\Pr(E_M)>0$.

The last statement follows from the fact that the right hand side in
(\ref{eq:lower}) is positive, therefore its tail end converges to $1$, so for
all $\epsilon>0$ there
exists $M'$ for which $\Pr(E_{M'})>1-\epsilon$.
\end{proof}

\begin{theorem}[Density and degrees]
\label{thm:degrees} The following statements hold.
\begin{enumerate}[i)]
\item If $\sum d_i/i=\infty$, then the process a.s.\ generates a graph in
which each vertex is of infinite degree.\label{thm:i}
\item If $\sum d_i/i<\infty$, then the process a.s.\ generates a graph in which each vertex is of finite degree.\label{thm:ii}
\end{enumerate}
\end{theorem}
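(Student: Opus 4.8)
The plan is to prove each part by analyzing, for a fixed vertex $v_i$, the total number of edges incident to $v_i$ and applying a Borel--Cantelli style argument together with the machinery already established in Lemma~\ref{lemma:hit}. Fix a vertex $v_i$. Its neighbors come in two flavors: the ``backward'' neighbors (those $v_j$ with $j<i$ chosen in round $i$), of which there are exactly $d_i$, always finite; and the ``forward'' neighbors (those $v_j$ with $j>i$ that selected $v_i$ in round $j$). So the degree of $v_i$ is finite if and only if $v_i$ has only finitely many forward neighbors, i.e. if and only if there exists $N$ such that $\{v_i\}$ has no neighbor beyond $v_N$. This reduces both statements to controlling, for the singleton set $V=\{v_i\}$ (the case $k=1$ of Lemma~\ref{lemma:hit}), the events $E_N$ that $v_i$ has no forward neighbor beyond $v_N$.

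For part~\ref{thm:i}: assume $\sum d_i/i=\infty$. By Lemma~\ref{lemma:hit}(i) applied to the singleton $V=\{v_i\}$, for every $N>i$ we have $\Pr(E_N)=0$, so a.s.\ $v_i$ has forward neighbors beyond every $v_N$, hence infinitely many forward neighbors, hence infinite degree. Now intersect over the countably many vertices $v_i$: a countable intersection of probability-one events has probability one, so a.s.\ every vertex has infinite degree. For part~\ref{thm:ii}: assume $\sum d_i/i<\infty$. Here I want to show that a.s.\ every $v_i$ has finite degree, equivalently that a.s.\ for every $i$ there is some $N$ with $E_N$ occurring for the singleton $\{v_i\}$. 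The cleanest route is a direct first-moment computation rather than quoting Lemma~\ref{lemma:hit} verbatim (which only gives positive, not full, probability of $E_M$ for a single set). Let $X_i$ be the number of forward neighbors of $v_i$, i.e. $X_i=\sum_{j>i}\mathbf{1}[v_j\sim v_i]$. Then $\Pr(v_j\sim v_i)=\binom{j-1}{d_j-1}/\binom{j}{d_j}=d_j/j$, so by linearity $E[X_i]=\sum_{j>i}d_j/j=t_{i+1}<\infty$. Therefore $X_i<\infty$ a.s., so $v_i$ has finite degree a.s., and again intersecting over all $i$ gives that a.s.\ every vertex has finite degree.

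The computation $\Pr(v_j\sim v_i)=d_j/j$ is the only nontrivial ingredient and it is elementary: in round $j$ the set $A$ is a uniformly random $d_j$-subset of a $j$-element set, so each fixed earlier vertex lies in $A$ with probability $d_j/j$. With this in hand both parts are short. I do not expect a genuine obstacle here; the mild subtlety is just making sure that ``finite/infinite degree for each fixed vertex'' is upgraded correctly to ``for all vertices simultaneously,'' which is handled by the countable (sub)additivity of the measure — a countable intersection of a.s.\ events is a.s. One could alternatively phrase part~\ref{thm:ii} using the second half of Lemma~\ref{lemma:hit}(i), but that statement is about a single arbitrary finite set $V$ and only yields $\Pr(E_M)>0$; since the complement $[X_i=\infty]$ a priori might still be null, the first-moment argument is the safe and self-contained choice, and it also gives the quantitative bound $E[X_i]=t_{i+1}$ which may be convenient later.
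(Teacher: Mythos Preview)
Your proof is correct. Part~\ref{thm:i} is handled exactly as in the paper: apply Lemma~\ref{lemma:hit}(i) to the singleton $\{v_i\}$ for arbitrary $N>i$, then take a countable intersection over all vertices. For part~\ref{thm:ii} you diverge slightly. The paper does \emph{not} use the first clause of Lemma~\ref{lemma:hit}(ii) (which, as you note, only gives $\Pr(E_M)>0$); it uses the ``furthermore'' clause, which says that for every $\epsilon>0$ there is $M'$ with $\Pr(\{v_i\}\text{ has a neighbor beyond }v_{M'})<\epsilon$, and since $[\deg v_i=\infty]$ is contained in that event for every $M'$, its probability is below every $\epsilon$ and hence zero. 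You instead compute $E[X_i]=\sum_{j>i}d_j/j=t_{i+1}<\infty$ directly and conclude $X_i<\infty$ a.s. Both arguments are short and valid; yours is self-contained and yields the explicit bound $E[X_i]=t_{i+1}$, while the paper's keeps everything routed through Lemma~\ref{lemma:hit}. Your remark that the lemma ``only yields $\Pr(E_M)>0$'' overlooks that second clause, but this does not affect the correctness of your chosen argument.
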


\begin{proof}
Both parts follow from Lemma~\ref{lemma:hit}. Let $v_k$ be a vertex
and let $N>k$ be an integer. In case~\ref{thm:i}), Lemma~\ref{lemma:hit} implies that a.s.\ $v_k$ has a neighbor beyond
$v_{N}$. This being true for arbitrary $N>k$, we conclude that a.s.\ $v_k$ has
infinitely many neighbors.

In case~\ref{thm:ii}), the lemma provides that the probability that $v_k$ is of
infinite degree is less than $\epsilon$ for all $\epsilon>0$, and therefore
that probability is $0$.
\end{proof}

Recall that the bipartite graphs $K_{1,l}$ for $l=0,1,2,\ldots$ are called
\emph{stars}. (For convenience, we allow $l=0$. In this case,  $K_{1,l}$ is
simply a singleton set.) To emphasize the size of the star, $K_{1,l}$ will
often be called an $l$-star. We say that a vertex in a graph is \emph{in a
star,} respectively \emph{in an $l$-star}, if the connected component of the
vertex is a star, respectively an $l$-star.

\begin{lemma}\label{lemma:allstar}
Suppose $\sum s_i d_i/i<\infty$. Then the process a.s.\ generates a 
graph $G$ which has the property that there exists an $N_1=N_1(G)$ such that for
all $n\geq N_1$ with $d_n >0$, the vertex $v_n$ will attach back to vertices
with current degree $0$. More rigorously, the vertex $v_n$ has the property
that if $v_j\sim v_n$, and $j<n$, then
$v_j$ has no neighbor before $v_n$.
\end{lemma}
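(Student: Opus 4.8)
The plan is to show that the relevant ``bad'' events are summable and then invoke the Borel--Cantelli lemma. For each $n$ with $d_n>0$, let $B_n$ be the event that when $v_n$ is added it attaches to at least one vertex that already has positive degree in the graph built through round $n-1$; for $n$ with $d_n=0$ set $B_n=\emptyset$. If only finitely many of the $B_n$ occur, then taking $N_1$ to be one more than the largest index for which $B_n$ happens (or, say, $0$ if none happens) yields precisely the asserted property, and this $N_1$ depends only on the realized graph. Hence it suffices to prove $\Pr(\limsup_n B_n)=0$, and since we only need the easy direction of Borel--Cantelli (no independence of the $B_n$ is required) this will follow once we show $\sum_n \Pr(B_n)<\infty$.

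To estimate $\Pr(B_n)$, I would condition on the history of the process through round $n-1$. At that stage the graph has exactly $s_{n-1}=\sum_{i=0}^{n-1} d_i$ edges, so the set $Z$ of vertices of positive degree has $|Z|\le 2s_{n-1}$ (a graph with $m$ edges has at most $2m$ vertices of positive degree). Conditionally on the history, $v_n$ attaches to a uniformly random $d_n$-subset $S$ of the $n$-element set $\{v_0,\dots,v_{n-1}\}$, and each fixed vertex belongs to $S$ with probability $d_n/n$. A union bound over $Z$ therefore gives
\[
\Pr(B_n\mid\text{history})=\Pr(S\cap Z\neq\emptyset\mid\text{history})\le |Z|\cdot\frac{d_n}{n}\le \frac{2s_{n-1}d_n}{n}.
\]
Since this bound is deterministic, $\Pr(B_n)\le 2s_{n-1}d_n/n\le 2s_n d_n/n$.

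Summing over $n$ and using the hypothesis $\sum_i s_i d_i/i<\infty$ gives $\sum_n \Pr(B_n)\le 2\sum_n s_n d_n/n<\infty$, which finishes the proof. There is essentially no obstacle here: the only two points that need a word of care are the elementary bound $|Z|\le 2s_{n-1}$ and the observation that $Z$ is random but controlled deterministically by the (non-random) partial sums, so that the conditional estimate passes directly to an unconditional one.
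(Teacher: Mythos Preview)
Your proof is correct. It reaches the same quantitative conclusion as the paper---that the ``bad'' event at step $n$ has probability $O(s_n d_n/n)$---but the route is different and somewhat more elementary. The paper computes the exact probability that $v_n$ attaches only to singletons via the hypergeometric ratio $\binom{n-2s_n}{d_n}/\binom{n}{d_n}$, invokes Proposition~\ref{prop:combi} to bound this from below by $(1-3s_n/n)^{d_n}$, and then argues via a tail-product-goes-to-$1$ contradiction. You instead bound the complementary event directly by a union bound over the at most $2s_{n-1}$ vertices of positive degree and finish with the first Borel--Cantelli lemma. Your argument avoids both Proposition~\ref{prop:combi} and the preliminary reduction to indices with $s_n d_n/n<1/3$; the paper's approach, on the other hand, yields an explicit lower bound on the probability that \emph{all} vertices beyond a given $M$ attach to singletons, which is slightly more information than Borel--Cantelli gives.
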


\begin{proof}
Note that $\sum s_i d_i/i<\infty$ implies $s_nd_n/n\to 0$ as $n\to\infty$, so there
exists $N$ such that for all $n>N$, $s_nd_n/n<1/3$. Consider such an $n$. Below
we will compute the probability that at stage $n$, the vertex $v_n$ attaches only
to the vertices that are currently of degree 0, i.e., singletons.

Observe that during the process, for every $i$ with $d_i=0$ one singleton is
created, and if $d_i>0$, then at most $d_i$ singletons are destroyed. One can
view this as always creating a singleton and then destroying no more than $2d_i$. So at step $i$, the number of singletons is at least $i-\sum_{j=0}^i
2d_j=i-2s_i$, and then, by Proposition~\ref{prop:combi} and the fact that
$d_n/n < 1/3$, we obtain that the probability 
that $v_n$ attaches to only singletons is
at least
\begin{eqnarray*}
\frac{\binom{n-2s_n}{d_n}}{\binom{n}{d_n}}&
\geq & \left ( 1-\frac{2s_n}{n-d_n} \right)^{d_n}
\geq \left ( 1-\frac{2s_n/n}{1-d_n/n} \right)^{d_n}\\
&\geq & \left ( 1-\frac{2s_n/n}{2/3} \right)^{d_n}
=  \left ( 1-\frac{3s_n}{n} \right)^{d_n}.
\end{eqnarray*}
Hence the probability that this happens to all vertices beyond $N$ is at least
\[
\prod_{n=N}^\infty \left( 1-\frac{3s_n}{n} \right)^{d_n}.
\]

The last product is positive as $\sum s_i d_i/i<\infty$. Hence, we have that for all $\epsilon>0$
there exists $M$, such that $\prod_{n=M}^\infty\left(1-\frac{3s_n
d_n}{n}\right)>1-\epsilon$. That means that with probability greater than
$1-\epsilon$, every vertex beyond $M$ attaches to singletons.

To complete the proof suppose that the existence of an
$N_1$ as in the statement has probability $p<1$. Choose $\epsilon<1-p$.
According to the argument above, there exists an $M$ such that the probability
that every vertex beyond $M$ attaches to singletons is greater than
$1-\epsilon>p$, and since $M$ is a suitable choice for $N_1$, this is a
contradiction.
\end{proof}

\begin{theorem}[Very sparse case]
Suppose $\sum s_i d_i/i<\infty$. Then the process a.s.\ generates a graph $G$
for which there is $N=N(G)$ such that for all $n >N$, $v_n$ is in a star. Moreover,
in addition, if $d_n >0$, then $v_n$ is in a $d_n$-star. 
\end{theorem}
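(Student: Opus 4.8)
The plan is to combine the previous two results. By Lemma~\ref{lemma:allstar}, since $\sum s_i d_i/i < \infty$, there is almost surely an $N_1 = N_1(G)$ such that every vertex $v_n$ with $n \ge N_1$ and $d_n > 0$ attaches only to vertices of current degree $0$. I want to upgrade this to the statement that $v_n$ lies in a star (and a $d_n$-star when $d_n>0$) for all $n$ past some threshold. The key additional fact needed is that, past some point, a vertex that is the center of a star at the time it first acquires neighbors is never again touched by any later vertex.

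First I would record the following observation: if $v_n$ (with $n \ge N_1$) attaches back to vertices $w_1, \dots, w_{d_n}$ all of current degree $0$, then immediately after step $n$ the component of $v_n$ is exactly $K_{1,d_n}$ with center $v_n$ and leaves $w_1,\dots,w_{d_n}$. This component remains a $d_n$-star for all time provided no later vertex $v_m$ ($m>n$) attaches to any of $v_n, w_1,\dots,w_{d_n}$. But by Lemma~\ref{lemma:allstar} again, for $m \ge N_1$ with $d_m > 0$, the vertex $v_m$ attaches only to degree-$0$ vertices, and $v_n, w_1, \dots, w_{d_n}$ all have degree $\ge 1$ at that stage; so such a $v_m$ cannot attach to any of them. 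Hence the only threat comes from vertices $v_m$ with $m < N_1$ — but there are only finitely many of those, so the set $S$ of vertices they ever attach to is finite. Set $N = \max(N_1, \max\{i : v_i \in S\}, \max S\text{'s indices})$; more carefully, let $N$ be large enough that $N \ge N_1$ and every vertex of index $< N_1$ and every vertex it attaches to has index $< N$. Then for $n > N$: if $d_n = 0$, $v_n$ starts as a singleton ($0$-star) and, by the argument above, is never touched by a later vertex with $d_m>0$ (it has degree $0$... wait, a singleton can be attached to) — so I need to be slightly more careful here.

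Let me adjust: the right threshold is to also guarantee $v_n$ is never chosen as a \emph{neighbor} by any later vertex. A vertex $v_m$ with $m > n \ge N_1$ and $d_m > 0$ attaches only to degree-$0$ vertices, so it \emph{could} attach to a singleton $v_n$. To rule this out I should instead argue on components directly. Here is the cleaner route: condition on the almost-sure event of Lemma~\ref{lemma:allstar}, fix $N_1$, and let $G'$ be the (random, finite) graph induced on $\{v_0,\dots,v_{N_1-1}\}$ together with all vertices of index $\ge N_1$ that are ever attached \emph{to} by some vertex of index $< N_1$. This is almost surely finite. Let $N$ be larger than all indices appearing in $G'$. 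Now take any $n > N$. I claim $v_n$'s component is a star. Run the process forward from step $n$: the neighbors of $v_n$ chosen at step $n$ (if $d_n > 0$) are all degree-$0$ vertices by Lemma~\ref{lemma:allstar}, so right after step $n$, $v_n$ is the center of a $d_n$-star. For any $m > n$ with $d_m > 0$, $v_m$ attaches only to degree-$0$ vertices; the center $v_n$ has degree $d_n$ and each leaf has degree $1$ (or, if $d_n = 0$, $v_n$ is a singleton of degree $0$ — and here I must still worry).

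So the honest fix for the $d_n = 0$ case: a singleton $v_n$ can be picked up as a leaf by a later $v_m$, but then $v_n$ becomes a leaf of a star centered at $v_m$, and the component is still a star — so the conclusion ``$v_n$ is in a star'' survives; it is just that the star may grow. Thus the statement ``$v_n$ is in a star'' is stable under the only moves that can occur past stage $N$, because every such move takes a collection of degree-$0$ vertices (each its own $0$-star) and a new center and forms a larger star, never merging two nontrivial components or adding an edge inside an existing star. The ``moreover'' clause, that $v_n$ is in a $d_n$-star when $d_n > 0$, then follows because once $v_n$ is a star center with $d_n$ leaves at step $n$, no later vertex can attach to $v_n$ (it has positive degree) nor to its leaves (they have positive degree), so that component is frozen as a $d_n$-star. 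I expect the main obstacle to be exactly this bookkeeping: being careful that ``attaches back only to degree-$0$ vertices'' genuinely prevents every edge that would destroy the star structure, and handling the boundary contributed by the finitely many early vertices ($i < N_1$) cleanly. The actual probability estimates are already done in Lemma~\ref{lemma:allstar}; what remains is a deterministic, combinatorial induction on the stages of the process, which I would write out step by step as above.
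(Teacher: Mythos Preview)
Your argument is correct and follows the paper's approach: invoke Lemma~\ref{lemma:allstar} to obtain $N_1$, observe that for $n>N_1$ with $d_n>0$ the $d_n$-star created at step $n$ is never touched again (since every later $v_m$ attaches only to degree-$0$ vertices), and for $d_n=0$ note that $v_n$ ends up either a singleton or a leaf of a star centered at some later $v_m$. Your detour worrying about vertices $v_m$ with $m<N_1$ is unnecessary---in this process each $v_m$ attaches at stage $m$ only to vertices of index $<m$, so no such $v_m$ can ever touch $v_n$ or its star for $n\ge N_1$; the paper accordingly just takes $N=N_1$.
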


\begin{proof}
By Lemma~\ref{lemma:allstar}, the process a.s.\ generates a graph $G$ for
which there is $N=N(G)$ such that for all $n > N$, at stage $n$, either $d_n=0$
or $v_n$ attaches to $d_n$ many
current degree $0$ vertices which precede $v_n$. Note that $v_k$, $k >n$, leaves untouched the star generated
by $v_n$.  Therefore, we obtain that $v_n$ is in a $d_n$-star. If $n > N$
and $d_n=0$, then the component of $v_n$ in $G$ is either a singleton
or a star generated by some $v_m$, $m>n$. \end{proof}

\section{Zero-one sequences}\label{s:zeroone}

As the title suggests, the standing assumption for the section is that $0\leq
d_n\leq 1\text{ for all }n\geq 0$.  We will also assume that there are
infinitely many $1$'s in the sequence, for otherwise we really have a finite
sequence and an essentially finite graph (plus isolated vertices), and we get a
problem of a very different flavor. It is clear that the number of connected
components of the generated graph is equal to the number of $0$'s in the
sequence, and each component is a tree.

\subsection{Notation}

For this section it will be convenient to introduce some notation
to denote certain tuples of indices and sums and products. First we introduce
notations on products and tuples.

We let $\mathbb{N} ^{<\mathbb{N}}$ denote set of all
finite strings of $\mathbb{N}$ including the empty string.
We define $f:\mathbb{N} ^{<\mathbb{N}} \rightarrow [0,\infty)$
by
\[f(\sigma) =\prod_{i=1}^n \frac{d_{\sigma_i}}{{\sigma_i}},
\]
where $\sigma=(\sigma_1,\ldots,\sigma_n)$. (By convention,
$f(\sigma) =1$ when $\sigma$ is the empty string.) The definition of
$f(\sigma)$ depends on the fixed sequence $\{d_n\}$.

Suppose $i, j, l \ge 1$ with $i \le j$. Then,
\begin{gather*}
A^l_{i} = \{(\sigma_1,\ldots,\sigma_l) \in \mathbb{N}^l|\min \{\sigma_1,\ldots,\sigma_l\}=i\},\\
B^l_{i} = \{\sigma \in A^l_i| \sigma \text{ is strictly increasing}\},\\
B^l_{i,j} = \{(\sigma_1,\ldots,\sigma_l) \in B^l_i|\max\{\sigma_1,\ldots,\sigma_l\} \le j\},\\
C^l_{i} = \{\sigma \in A^l_i| \sigma \text{ is injective}\},\\
D^l_i = \{i,i+1,\ldots\}^l=\{(\sigma_1,\ldots,\sigma_l) \in \mathbb{N}^l|\min \{\sigma_1,\ldots,\sigma_l\}\geq i\}.
\end{gather*}

The following notations are about sums and series.
\begin{gather*}
s_{m,n}=\sum_{i=m}^n d_i\qquad
s_n=s_{0,n}=\sum_{i=0}^n d_i\qquad
t_{n,m}=\sum_{i=n}^m \frac{d_i}{i}\qquad
t_n=t_{n,\infty}=\sum_{i=n}^\infty \frac{d_i}{i}.
\end{gather*}

\subsection{The sparse case for zero--one sequences}

What we proved in Section~\ref{s:density} essentially gives us the
behavior of the probability space in the dense case, when $\sum d_i/i=\infty$,
and the very sparse case, when $\sum s_id_i/i<\infty$. We will summarize these
findings (and much more) in Theorem~\ref{thm:zo}. This subsection will entirely
be devoted to the sparse case. Accordingly, throughout the subsection we will
assume that $\sum d_i/i<\infty$. Our findings \emph{will} apply for the very
sparse case, giving an alternative proof of the characterization of the space
in that case. But note, that the findings of Section~\ref{s:density}
apply \emph{in the general setting} (not only
zero--one), so those theorems still have their importance.

\begin{proposition}\label{lemma:limit}
$\lim_{i\to\infty}s_i/i=0$.
\end{proposition}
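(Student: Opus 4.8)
The plan is to recognize this as an instance of the standard fact (essentially Kronecker's lemma with weights $x_n=n$) that convergence of $\sum d_i/i$ forces the Ces\`aro-type averages $s_i/i$ to tend to $0$, and to prove it directly with an $\epsilon$-splitting argument so the subsection stays self-contained. Fix $\epsilon>0$. Since $\sum_{i\ge 1} d_i/i<\infty$ by the standing assumption of this subsection, I would first choose $N$ large enough that the tail satisfies $t_N=\sum_{i\ge N} d_i/i<\epsilon$.

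Next, for $n>N$, I would split the sum as $s_n=s_{N-1}+\sum_{i=N}^{n} d_i$ and bound the two pieces separately. The first piece is a fixed constant, so $s_{N-1}/n\to 0$ as $n\to\infty$, and in particular $s_{N-1}/n<\epsilon$ once $n>s_{N-1}/\epsilon$. For the second piece the key (trivial) observation is that $d_i=i\cdot(d_i/i)\le n\cdot(d_i/i)$ for every $i\le n$, whence
\[
\frac{1}{n}\sum_{i=N}^{n} d_i\ \le\ \frac{1}{n}\sum_{i=N}^{n} n\cdot\frac{d_i}{i}\ =\ \sum_{i=N}^{n}\frac{d_i}{i}\ \le\ t_N\ <\ \epsilon .
\]
Combining the two bounds gives $s_n/n<2\epsilon$ for all sufficiently large $n$; since $\epsilon$ was arbitrary and $s_n/n\ge 0$, this yields $s_n/n\to 0$.

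Honestly there is no real obstacle here: the entire content is the choice of cut-off $N$ that makes the tail of the convergent series small, together with the estimate $i\le n$. An alternative would be to simply invoke Kronecker's lemma applied to the convergent series $\sum d_i/i$ with weights $x_n=n$, but I would prefer the three-line direct argument above.
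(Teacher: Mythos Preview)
Your proof is correct and follows essentially the same $\epsilon$-splitting strategy as the paper: split $s_n$ into a head and a tail and use $d_i/n\le d_i/i$ on the tail. The only cosmetic difference is that the paper chooses a moving cut-off $\lfloor \epsilon n/2\rfloor$ (using $d_i\le 1$ to bound the head by $\epsilon/2$), whereas you fix the cut-off $N$ once from the tail condition $t_N<\epsilon$; your variant is in fact slightly cleaner since it does not need the bound $d_i\le 1$.
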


\begin{proof}
Let $\epsilon>0$ small. %So that \epsilon n/2\leq n.
Then, 
\[
\frac{s_n}{n}
=\frac{\sum_{i=0}^n d_i}{n}
\leq\frac{\sum_{i=0}^{\lfloor \epsilon n/2\rfloor} d_i}{n}
	+\sum_{i=\lfloor \epsilon n/2\rfloor+1}^n \frac{d_i}{i}
\leq\frac{\epsilon}{2}+\sum_{i=\lfloor\epsilon n/2\rfloor+1}^\infty \frac{d_i}{i}.
\]
As $\sum d_i/i<\infty$, the second term converges to zero as $n\to\infty$, so
eventually it will be less than $\epsilon/2$.
\end{proof}

Let $a(k)_i$ denote the expected number of trees of size $k$ spanned by
the vertex set $\{v_0,\ldots,v_i\}$. We will prove a sequence of technical
lemmas about the sequences $a(k)$.

\begin{proposition}\label{prop:recursion} For $i\geq 1$ and $k\geq 2$,
\[
a(k)_i=a(k)_{i-1}+\frac{(k-1)a(k-1)_{i-1}}{i}d_i-\frac{k a(k)_{i-1}}{i}d_i.
\]
\end{proposition}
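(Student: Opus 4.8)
The plan is to derive the recursion by conditioning on stage $i$ of the process and accounting for how the addition of vertex $v_i$ changes the count of size-$k$ tree components. Recall that $a(k)_i$ is the expectation of $T(k)_i$, the number of components of the graph on $\{v_0,\dots,v_i\}$ that are trees with exactly $k$ vertices; since in the zero--one regime $d_i\in\{0,1\}$, at stage $i$ the vertex $v_i$ either becomes an isolated vertex (if $d_i=0$) or attaches to exactly one earlier vertex $v_j$ chosen uniformly from $\{v_0,\dots,v_{i-1}\}$ (if $d_i=1$). By the linearity of expectation it suffices to track the expected \emph{change} $a(k)_i-a(k)_{i-1}=E[T(k)_i-T(k)_{i-1}]$.

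First I would dispose of the case $d_i=0$: adding an isolated vertex creates a new component, which is a tree of size $1$, and leaves every other component untouched, so $T(k)_i-T(k)_{i-1}=0$ for $k\ge 2$. This is consistent with the claimed formula, whose correction terms both carry a factor $d_i$. Next, for $d_i=1$, I would condition on which earlier vertex $v_j$ the new edge $v_iv_j$ lands on, each with probability $1/i$. The key observation is that attaching $v_i$ to $v_j$ merges the (tree) component $\mathcal{C}$ containing $v_j$ with the new singleton $\{v_i\}$, producing a tree whose size is $|\mathcal{C}|+1$; all other components are unchanged, and note $\mathcal{C}$ is automatically a tree since it was a component of a graph each of whose components is a tree (the zero--one process never creates a cycle). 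Hence $T(k)_i-T(k)_{i-1}$ decreases by $1$ exactly when $\mathcal{C}$ was a size-$k$ tree (that component is destroyed) and increases by $1$ exactly when $\mathcal{C}$ was a size-$(k-1)$ tree (a new size-$k$ tree is born). Therefore, conditioning on the graph $G_{i-1}$ on $\{v_0,\dots,v_{i-1}\}$,
\[
E\bigl[T(k)_i-T(k)_{i-1}\,\big|\,G_{i-1}\bigr]
=\frac{1}{i}\Bigl((k-1)\,T(k-1)_{i-1}-k\,T(k)_{i-1}\Bigr),
\]
because a uniformly random vertex $v_j$ lands in a fixed size-$m$ tree component with probability $m/i$, so $(k-1)T(k-1)_{i-1}$ counts the vertices lying in size-$(k-1)$ tree components and $kT(k)_{i-1}$ counts those in size-$k$ tree components. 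Taking expectations over $G_{i-1}$ and using $E[T(k-1)_{i-1}]=a(k-1)_{i-1}$, $E[T(k)_{i-1}]=a(k)_{i-1}$, then reinstating the factor $d_i$ to unify the two cases, yields exactly
\[
a(k)_i=a(k)_{i-1}+\frac{(k-1)a(k-1)_{i-1}}{i}d_i-\frac{k\,a(k)_{i-1}}{i}d_i.
\]

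The only point requiring genuine care — and thus the main obstacle — is justifying the bookkeeping cleanly, namely that in the $d_i=1$ case the merged component $\mathcal{C}$ is always a tree, that its size is the \emph{only} relevant statistic (a size-$k$ tree is destroyed iff $|\mathcal{C}|=k$, a size-$k$ tree is created iff $|\mathcal{C}|=k-1$), and that summing the indicator ``$v_j$ lies in a size-$m$ tree component of $G_{i-1}$'' over $j\in\{0,\dots,i-1\}$ gives $m$ times the number of such components. All of these are immediate once one notes that components of the zero--one process are always trees, so I would state that fact explicitly (it is already remarked in the text) and then the computation above is routine. A small formal remark: for $k=2$ the term $(k-1)a(k-1)_{i-1}=a(1)_{i-1}$ correctly counts the vertices in singleton components, and the recursion holds verbatim, so no separate base case is needed beyond $k\ge 2$ as stated.
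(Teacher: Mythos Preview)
Your proof is correct and takes essentially the same approach as the paper: both dispose of the case $d_i=0$ trivially and, for $d_i=1$, compute the expected change in the size-$k$ component count by conditioning on which earlier vertex $v_i$ attaches to, noting that a size-$k$ tree is created exactly when the target lies in a size-$(k-1)$ component and destroyed exactly when it lies in a size-$k$ component. Your version is a bit more explicit in separating the random counts $T(k)_{i-1}$ from their expectations via conditioning on $G_{i-1}$, whereas the paper writes the creation and destruction probabilities $p_+,p_-$ directly in terms of $a(k-1)_{i-1}$ and $a(k)_{i-1}$, but the content is identical.
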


\begin{proof} Suppose $d_i=1$.
Consider the process just before we add the edge from $v_i$. Let $p_+$ be
the probability that we increase the number of trees of size $k$, and let $p_-$
be the probability that we decrease that number. In either case, the change is
$\pm 1$. Let $p_0=1-(p_+-p_-)$. On one hand,
$p_+=\frac{(k-1)a(k-1)_{i-1}}{i}$, and $p_-=\frac{k a(k)_{i-1}}{i}$. On the
other hand
\[
a(k)_i=p_+(a(k)_{i-1}+1)+p_-(a(k)_{i-1}-1)+p_0 a(k)_{i-1}=a(k)_{i-1}+(p_+-p_-).
\]

In the other case, if $d_i=0$, then $a(k)_i=a(k)_{i-1}$.
\end{proof}

\begin{lemma}\label{lemma:recursion}
Let $k\geq 2$. Then there exist positive constants $C_1, C_2$ such that for all $i\geq 1$
\[
C_1\sum_{j=k}^i\frac{a(k-1)_{j-1}}{j}d_j
\leq a(k)_i
\leq C_2\sum_{j=1}^i\frac{a(k-1)_{j-1}}{j}d_j.
\]
\end{lemma}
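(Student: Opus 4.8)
The plan is to prove Lemma~\ref{lemma:recursion} by induction-free direct manipulation of the recursion in Proposition~\ref{prop:recursion}, unwinding it into a telescoping sum and then controlling the ``damping'' factor $\prod (1 - kd_j/j)$ that appears. Writing $r_i = a(k)_i$, $b_i = (k-1)a(k-1)_{i-1}d_i/i$, and $\lambda_i = kd_i/i$, the recursion reads $r_i = (1-\lambda_i) r_{i-1} + b_i$, with $r_{k-1}=0$ (no tree of size $k$ can be spanned before $v_{k-1}$ appears; more crudely $r_0=0$). Iterating gives the exact formula
\[
a(k)_i = \sum_{j=k}^i \left(\prod_{m=j+1}^i (1-\lambda_m)\right) b_j .
\]
So everything reduces to showing the product $P_{j,i} = \prod_{m=j+1}^i (1-\lambda_m)$ is bounded above and below by positive constants independent of $j$ and $i$.

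For that I would invoke the hypothesis $\sum d_i/i < \infty$, which is in force throughout the subsection: since $\lambda_m = kd_m/m$ and $\sum_m d_m/m < \infty$, the full infinite product $\prod_{m\ge M}(1-\lambda_m)$ converges to a positive limit for $M$ large enough (using the Proposition relating infinite products to sums, or just $\log(1-\lambda_m) \ge -C\lambda_m$ once $\lambda_m \le 1/2$). Hence there is $M_0$ and a constant $c_0>0$ with $c_0 \le P_{j,i} \le 1$ for all $i \ge j \ge M_0$. For the finitely many indices $j < M_0$ one absorbs the finite product $\prod_{m\le M_0}(1-\lambda_m)$ (which is a fixed positive number, provided no $\lambda_m=1$; if some $\lambda_m = 1$, i.e. $d_m = m/k$, then that term kills $a(k)_m$ and one simply starts the sum at the largest such $m$ — or more cleanly, in the zero-one regime we care about $d_m\le 1$ so $\lambda_m = k/m < 1$ for $m>k$, making this a non-issue). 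Combining, $C_1 \le P_{j,i} \le C_2$ uniformly, and substituting into the telescoped formula gives exactly the claimed two-sided bound, with the lower sum starting at $j=k$ because $b_j=0$ for $j<k$ (there is no tree of size $k-1$ to extend), and the upper sum harmlessly extended down to $j=1$.

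The one genuine subtlety — the step I'd flag as the main obstacle — is the uniformity of the lower bound on $P_{j,i}$ across \emph{all} starting points $j$, not just large ones: the product $\prod_{m=j+1}^i(1-\lambda_m)$ must be bounded below by a single positive constant even as $j$ ranges over small indices where individual factors could a priori be small or (if $d_m$ were large) zero. In the general setting this forces the caveat about $\lambda_m=1$; but since Lemma~\ref{lemma:recursion} is used only in the sparse zero--one development where $d_m \le 1$, every factor $1 - k/m$ with $m > k$ is a fixed positive number and the product over $k \le m \le M_0$ is a concrete positive constant, so the obstacle evaporates. I would therefore either state the lemma with the running hypotheses of the subsection ($d_i \le 1$, $\sum d_i/i<\infty$) explicit, or add one sentence handling the degenerate factors, and otherwise the proof is the telescoping identity above plus the infinite-product convergence fact already recorded in the paper.
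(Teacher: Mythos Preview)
Your approach is essentially the paper's: unwind the recursion of Proposition~\ref{prop:recursion} into the telescoped form $a(k)_i=\sum_j b_j\prod_{m=j+1}^i(1-\lambda_m)$ and bound the product above by $1$ and below by the convergent tail $Q=\prod_{l=k+1}^\infty(1-kd_l/l)>0$. Two minor points: the exact formula should begin at $j=k-1$ (since $a(k)_{k-2}=0$ but $a(k)_{k-1}$ need not vanish), and your $M_0$ discussion is unnecessary---because the lower sum starts at $j=k$, every factor in $P_{j,i}$ has $m\ge k+1$, so the paper simply takes $C_1=Q$ directly without splitting into large and small $j$.
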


\begin{proof}
The upper bound is a straightforward consequence of
Proposition~\ref{prop:recursion}. Indeed, $a(k)_i\leq a(k)_{i-1}+\frac{k
a(k-1)_{i-1}}{i}d_i$, so $a(k)_i\leq k\sum_{j=1}^i\frac{a(k-1)_{j-1}}{j}d_j$.

For the lower bound, notice that
\[
a(k)_i\geq a(k)_{i-1}\left(1-\frac{kd_i}{i}\right)+\frac{a(k-1)_{i-1}}{i}d_i.
\]
This implies
\begin{multline*}
a(k)_i
\geq\sum_{j=1}^i \left[\frac{a(k-1)_{j-1}}{j}d_j\prod_{l=j+1}^i\left(1-\frac{k
d_l}{l}\right)\right]\\
\geq\sum_{j=k}^i \left[\frac{a(k-1)_{j-1}}{j}d_j\prod_{l=j+1}^i\left(1-\frac{k
d_l}{l}\right)\right]
\geq\sum_{j=k}^i \left[\frac{a(k-1)_{j-1}}{j}d_j\prod_{l=k+1}^\infty\left(1-\frac{k d_l}{l}\right)\right]\\
\geq Q\sum_{j=k}^i\frac{a(k-1)_{j-1}}{j}d_j,
\end{multline*}
where $Q=\prod_{l=k+1}^\infty\left(1-\frac{k d_l}{l}\right)$. Note that the
second inequality is correct, because for $j=1,\ldots,k-2$, we have
$a(k-1)_{j-1}=0$ (so the omitted terms are zero), and for $j=k-1$ the omitted term is nonnegative.
Also note that $\sum d_i/i<\infty$ implies $Q>0$.
\end{proof}

\begin{lemma}\label{lemma:upbd}
Let $k\geq 2$. Then there exists positive constants $K$ such that for all $i$,
\[
a(k)_i
\leq K\sum_{j=1}^i d_j t_{j+1}^{k-2}.
\]
\end{lemma}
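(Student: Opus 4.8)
The plan is to prove the bound $a(k)_i \le K \sum_{j=1}^i d_j t_{j+1}^{k-2}$ by induction on $k \ge 2$, using the upper bound from Lemma~\ref{lemma:recursion} as the engine that reduces the $k$-statement to the $(k-1)$-statement. The base case $k=2$ is immediate: Lemma~\ref{lemma:recursion} gives $a(2)_i \le C_2 \sum_{j=1}^i \frac{a(1)_{j-1}}{j} d_j$, and since a size-$1$ tree is just an isolated vertex we have $a(1)_{j-1} \le j$, so $a(2)_i \le C_2 \sum_{j=1}^i d_j = C_2 \sum_{j=1}^i d_j t_{j+1}^{0}$, recalling the convention $0^0=1$. (One should double-check the edge case where $t_{j+1}=0$, i.e.\ all $d_l=0$ for $l>j$; then $d_j t_{j+1}^0 = d_j$, which is exactly what the computation produces, so the convention is consistent.)

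For the inductive step, assume the bound holds for $k-1$, i.e.\ $a(k-1)_{j-1} \le K' \sum_{m=1}^{j-1} d_m t_{m+1}^{k-3}$ for some constant $K'$. Plugging into the upper bound of Lemma~\ref{lemma:recursion},
\[
a(k)_i \le C_2 \sum_{j=1}^i \frac{d_j}{j}\, a(k-1)_{j-1} \le C_2 K' \sum_{j=1}^i \frac{d_j}{j} \sum_{m=1}^{j-1} d_m t_{m+1}^{k-3}.
\]
Now I would swap the order of summation, summing first over $j$ with $m < j \le i$:
\[
a(k)_i \le C_2 K' \sum_{m=1}^{i} d_m t_{m+1}^{k-3} \sum_{j=m+1}^{i} \frac{d_j}{j} \le C_2 K' \sum_{m=1}^i d_m t_{m+1}^{k-3} \cdot t_{m+1},
\]
where the inner sum $\sum_{j=m+1}^i d_j/j \le \sum_{j=m+1}^\infty d_j/j = t_{m+1}$ by definition of $t_{m+1}$ and the fact that all terms are nonnegative. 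This gives $a(k)_i \le C_2 K' \sum_{m=1}^i d_m t_{m+1}^{k-2}$, which is the claim with $K = C_2 K'$.

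The only place that needs care, and the main (mild) obstacle, is bookkeeping the exponents and the convention $0^0=1$ when $k=3$: there the induction hypothesis term is $d_m t_{m+1}^{0} = d_m$, and one must make sure that multiplying by $t_{m+1}$ to get $d_m t_{m+1}^{1}$ is still valid when some $t_{m+1}=0$ — it is, trivially, since both sides are then $0$ (note $d_m t_{m+1}^0 \cdot t_{m+1} = d_m t_{m+1}$ regardless). A secondary point is that the constant $C_2$ from Lemma~\ref{lemma:recursion} depends on $k$, but since we only iterate the induction finitely many times to reach any fixed $k$, the accumulated constant $K = K(k)$ is still a genuine (finite) positive constant, which is all the statement asks for. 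No step involves the random process directly; this is a purely deterministic estimate on the expectation sequences $a(k)$, so once the summation-swap is set up the rest is routine.
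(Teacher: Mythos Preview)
Your proof is correct and follows essentially the same route as the paper: induction on $k$, with the base case handled via $a(1)_{j-1}\le j$ in the upper bound of Lemma~\ref{lemma:recursion}, and the inductive step by plugging the induction hypothesis into that same upper bound, swapping the order of summation, and bounding the inner sum $\sum_{j=m+1}^i d_j/j$ by $t_{m+1}$. The extra care you take with the $0^0$ convention and the $k$-dependence of the constant is harmless elaboration on the paper's more compact argument.
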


\begin{proof}
We proceed by induction on $k$. Let $k=2$. By Lemma~\ref{lemma:recursion}, there exists $C_2$ such that $a(2)_i\leq
C_2\sum_{j=1}^i\frac{a(1)_{j-1}}{j}d_j\leq C_2\sum_{j=1}^i d_j$. 

Now suppose that $k\geq 3$. By Lemma~\ref{lemma:recursion} and the induction
hypothesis, there exist $C_2$ and $C$ positive constants such that
\begin{multline*}
a(k)_i
\leq C_2\sum_{j=1}^i\frac{a(k-1)_{j-1}}{j}d_j
\leq C\sum_{j=2}^i\sum_{l=1}^{j-1}d_lt_{l+1}^{k-3}\frac{d_j}{j}
\leq C\sum_{l=1}^{i-1}\sum_{j=l+1}^i d_l t_{l+1}^{k-3}\frac{d_j}{j}\\
\leq C\sum_{l=1}^{i-1}d_lt_{l+1}^{k-3}\sum_{j=l+1}^i\frac{d_j}{j}
\leq C\sum_{l=1}^{i-1}d_lt_{l+1}^{k-2}.
\end{multline*}
\end{proof}

\begin{lemma}\label{lemma:finitesum}
Let $l \ge 1$.
\[ \sum_{i=1}^{\infty} \sum_{\sigma \in A^l_i} f(\sigma) < \infty.
\]
\end{lemma}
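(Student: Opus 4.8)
The plan is to recognize that the double sum on the left is nothing more than a reindexing of an $l$-fold product, so the whole statement reduces to the sparse-case hypothesis $t_1<\infty$. First I would observe that for any $\sigma=(\sigma_1,\ldots,\sigma_l)$ all of whose coordinates are at least $1$, exactly one set $A^l_i$ with $i\ge 1$ contains $\sigma$, namely the one with $i=\min\{\sigma_1,\ldots,\sigma_l\}$. Hence $\{A^l_i\}_{i\ge 1}$ is a partition of $\{1,2,\ldots\}^l$, and since every term $f(\sigma)=\prod_{j=1}^l d_{\sigma_j}/\sigma_j$ is nonnegative, the terms may be summed in any order, giving
\[
\sum_{i=1}^{\infty}\sum_{\sigma\in A^l_i} f(\sigma)
=\sum_{\sigma\in\{1,2,\ldots\}^l} f(\sigma).
\]

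Next I would factor the right-hand side. Writing out $f(\sigma)$ coordinatewise and again using nonnegativity of all terms, the sum over $\{1,2,\ldots\}^l$ splits as a product of $l$ identical one-dimensional sums:
\[
\sum_{\sigma\in\{1,2,\ldots\}^l} f(\sigma)
=\prod_{j=1}^{l}\left(\sum_{m=1}^{\infty}\frac{d_m}{m}\right)=t_1^{\,l}.
\]
Since we are in the sparse case, $t_1=\sum_{m=1}^{\infty} d_m/m<\infty$, and therefore $t_1^{\,l}<\infty$, which is exactly the assertion of the lemma.

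There is essentially no obstacle here; the only points requiring a word of care are that the coordinates run over $\{1,2,\ldots\}$ rather than $\{0,1,\ldots\}$ — which is automatic because $A^l_i$ is only considered for $i\ge 1$, so $f$ never encounters the undefined ratio $d_0/0$ — and that the rearrangement and factorization are legitimate even before finiteness is known, which follows from the standard fact that a series of nonnegative terms has a well-defined sum in $[0,\infty]$ independent of the order of summation. So the real content is simply recognizing the partition structure of $\bigcup_{i\ge1}A^l_i$ and invoking the sparse hypothesis.
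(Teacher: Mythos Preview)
Your argument is correct and is in fact the cleanest possible: recognizing that $\{A^l_i\}_{i\ge 1}$ partitions $D^l_1=\{1,2,\ldots\}^l$ immediately gives
\[
\sum_{i\ge 1}\sum_{\sigma\in A^l_i} f(\sigma)=\sum_{\sigma\in D^l_1} f(\sigma)=t_1^{\,l}<\infty.
\]
The paper proceeds differently. Instead of summing over the partition all at once, it bounds each inner sum separately: for $l\ge 2$ it conditions on which coordinate of $\sigma$ attains the minimum $i$, obtaining $\sum_{\sigma\in A^l_i} f(\sigma)\le l\,\dfrac{d_i}{i}\,t_i^{\,l-1}$, and then sums over $i$ using that $\{t_i\}$ is bounded. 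Your route is more direct and yields the sharper explicit value $t_1^{\,l}$; the paper's route, while a bit longer, isolates a termwise estimate on $\sum_{\sigma\in A^l_i} f(\sigma)$ in terms of $d_i/i$ and $t_i$, which is the sort of bound that recurs in the surrounding lemmas (e.g.\ Lemma~\ref{lemma:bothdiverge}). For the present lemma alone, though, your argument is preferable.
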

\begin{proof} For $l=1$ the above statement is equivalent to the sparsity
condition $\sum d_i/i<\infty$.
For $l>1$, we note that 
\[
\sum_{\sigma \in A^l_i} f(\sigma)
\le\sum_{j=1}^l \sum_{\substack{\sigma \in A^l_i\\\sigma_j =i}}\frac{d_i}{i}\prod^l_{\substack{k=1\\ k \neq j }} \frac{d_{\sigma_k}}{{\sigma_k}}
= \sum_{j=1}^l \frac{d_i}{i}\sum_{\sigma \in D_i^{l-1}} f(\sigma)
=\sum_{j=1}^l \frac{d_i}{i}t^{l-1}_i  = l \frac{d_i}{i}t^{l-1}_i 
\]
Then, 
\[ \sum_{i=1}^{\infty} \sum_{\sigma \in A^l_i} f(\sigma) \le l  
\sum_{i=1}^{\infty} \frac{d_i}{i}t^{l-1}_i.
\]
As $\lim_{i \rightarrow \infty} t_i =0$, we have that $\{t_i\}_{i=1}^{\infty}$ is bounded
and hence the desired series converges.
\end{proof}

\begin{lemma}\label{lemma:bothdiverge}
Suppose that $l\ge 1$. Then, 
\[
\sum_{i=1}^{\infty} d_i t^l_i =\infty
\implies
\sum_{i=1}^{\infty}s_i \sum_{\sigma \in B^l_i} f(\sigma)=\infty.
\]
\end{lemma}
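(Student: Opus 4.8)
The plan is to replace the weighted series $\sum_i d_i t_i^l$ by a series over $l$‑tuples of indices with a prescribed minimum, and then to show that the non‑injective tuples contribute only a convergent amount, so that the (larger) contribution of the increasing tuples must still diverge.

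First I would record an identity. Expanding the power, $t_i^l=\sum_{\sigma\in D^l_i}f(\sigma)$, and splitting $D^l_i$ according to the value of $\min\sigma$ gives the disjoint union $D^l_i=\bigcup_{m\ge i}A^l_m$, so that $t_i^l=\sum_{m\ge i}\sum_{\sigma\in A^l_m}f(\sigma)$. Interchanging the two sums of nonnegative terms,
\[
\sum_{i=1}^\infty d_i t_i^l=\sum_{m=1}^\infty\Big(\sum_{i=1}^m d_i\Big)\sum_{\sigma\in A^l_m}f(\sigma)\le\sum_{m=1}^\infty s_m\sum_{\sigma\in A^l_m}f(\sigma),
\]
so the hypothesis forces $\sum_m s_m\sum_{\sigma\in A^l_m}f(\sigma)=\infty$. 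Moreover, sorting an injective tuple increasingly is an $f$‑preserving $l!$‑to‑one map from $C^l_m$ into $B^l_m$, so $\sum_{\sigma\in C^l_m}f(\sigma)\le l!\sum_{\sigma\in B^l_m}f(\sigma)$. Hence it suffices to prove that the non‑injective part is summable,
\[
\sum_{m=1}^\infty s_m\sum_{\sigma\in A^l_m\setminus C^l_m}f(\sigma)<\infty;
\]
for then, subtracting this convergent series from the divergent series $\sum_m s_m\sum_{\sigma\in A^l_m}f(\sigma)$, we get $l!\sum_m s_m\sum_{\sigma\in B^l_m}f(\sigma)=\infty$, which is the assertion. (Nothing is needed for $l=1$, since then $A^l_m=C^l_m$; for $l=2$ the only non‑injective tuple with minimum $m$ is $(m,m)$, and $\sum_m s_m(d_m/m)^2\le 2\sum_m d_m/m<\infty$ because $s_m\le m+1$. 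So assume $l\ge 3$.)

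For the summability of the non‑injective part, fix $l\ge 3$ and $\sigma\in A^l_m\setminus C^l_m$: it has a coordinate equal to $m$ and is not injective. After choosing one coordinate equal to $m$, either some other coordinate also equals $m$, or the remaining $l-1$ coordinates are all $>m$ and contain an equal pair; in either case, bounding the common value of the pair and letting the other coordinates range freely over $\{m,m+1,\dots\}$, the weight $f(\sigma)$ is dominated — up to a factor depending only on $l$ — by $(d_m/m)^2 t_m^{l-2}$ in the first case and by $(d_m/m)\big(\sum_{n\ge m}(d_n/n)^2\big)t_m^{l-3}$ in the second. Summing over the finitely many such choices,
\[
\sum_{\sigma\in A^l_m\setminus C^l_m}f(\sigma)\le c_l\Big[(d_m/m)^2 t_m^{l-2}+(d_m/m)\Big(\sum_{n\ge m}(d_n/n)^2\Big)t_m^{l-3}\Big].
\]
Now I would invoke the zero--one hypothesis, so that $(d_i/i)^2=d_i/i^2$, and the sparsity $\sum d_i/i<\infty$, so that $t_m\le t_1$ (hence $t_m^{l-2}$ and $t_m^{l-3}$ are bounded) and $s_m/m\le 2$ for $m\ge 1$. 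Up to constants, the first term then contributes $\sum_m s_m d_m/m^2\le 2\sum_m d_m/m=2t_1<\infty$; and since $s_m d_m/m\le 2d_m$, hence $\sum_{m\le n}s_m d_m/m\le 2s_n\le 4n$, the second contributes, after interchanging sums,
\[
\sum_{m}s_m\frac{d_m}{m}\sum_{n\ge m}\frac{d_n}{n^2}=\sum_n\frac{d_n}{n^2}\sum_{m\le n}\frac{s_m d_m}{m}\le\sum_n\frac{d_n}{n^2}\cdot 4n=4\sum_n\frac{d_n}{n}=4t_1<\infty.
\]
Both pieces being finite, the displayed summability follows, and with it the lemma.

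The main obstacle is this last step, and in particular the need to avoid wasteful estimates in it. Collapsing $\sum_{\sigma\in A^l_m\setminus C^l_m}f(\sigma)$ to $\binom{l}{2}\big(\sum_{n\ge m}(d_n/n)^2\big)t_m^{l-2}$ and then bounding $\sum_{m\le n}s_m$ by $n s_n$ re‑introduces a series comparable to $\sum_i d_i t_i$, which diverges precisely when the hypothesis holds (since $t_i\to 0$ forces $t_i^l\le t_i$ eventually). What rescues the estimate is retaining one factor $d_m/m$ attached to a coordinate realizing the minimum — exactly the saving that makes the error term converge in the zero--one regime.
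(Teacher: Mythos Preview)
Your argument is correct and follows the same strategy as the paper: rewrite $\sum_i d_i t_i^l$ as $\sum_j s_j\sum_{\sigma\in A^l_j}f(\sigma)$, show that the non-injective contribution $\sum_j s_j\sum_{\sigma\in A^l_j\setminus C^l_j}f(\sigma)$ is summable, and then use the $l!$-to-one correspondence between $C^l_j$ and $B^l_j$. The only difference is in the non-injective estimate: instead of your two-case split and sum interchange, the paper exploits the zero--one inequality $d_m/m\le d_i/i$ for $m>i$ (when both are nonzero) to replace $\frac{d_i}{i}\big(\frac{d_m}{m}\big)^2$ by $\big(\frac{d_i}{i}\big)^2\frac{d_m}{m}$ and absorb $\sum_{m>i}d_m/m$ into $t_i$, collapsing everything into the single bound $(l-1)\binom{l}{2}\sum_i\frac{s_i}{i}\,\frac{d_i}{i}\,t_i^{l-2}<\infty$.
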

\begin{proof}
By rearranging and switching the order of summation, we have that
\begin{equation}
\label{star}
\infty=\sum_{i=1}^{\infty} d_i t^l_i
= \sum_{i=1}^{\infty} d_i \sum_{j=i}^\infty\sum_{\sigma\in A_j^l} f(\sigma)
= \sum_{j=1}^\infty\sum_{i=1}^j d_i\left(\sum_{\sigma\in A_j^l}
	f(\sigma)\right)
= \sum_{j=1}^{\infty}s_j \sum_{\sigma \in A^l_j} f(\sigma).
\end{equation}
We next observe that if $l=1$, then $A^l_i=B^l_i$ and the proof is complete. Hence,
let us assume that $l \ge 2$.  We will next show that
\begin{equation}
\label{starstar}\sum_{i=1}^{\infty}s_i \sum_{\sigma \in A^l_i \setminus C^l_i} f(\sigma) < \infty.
\end{equation}
\begin{multline*}
\sum_{i=1}^{\infty}s_i \sum_{\sigma \in A^l_i \setminus C^l_i} f(\sigma) \le \sum_{i=1}^{\infty}s_i   \sum_{1 \le j < k \le l } \sum_{\substack {\sigma \in A^l_{i}\\  \sigma_j=\sigma_k }} f(\sigma)\\
= \sum_{i=1}^{\infty}s_i   \sum_{1 \le j < k \le l } \left ( \sum_{\substack {\sigma \in A^l_{i}\\  \sigma_j=\sigma_k =i}} f(\sigma) + \sum_{m=i+1}^{\infty} \sum_{\substack {\sigma \in A^l_{i}\\  \sigma_j=\sigma_k =m}} f(\sigma) \right )\\
 \le \sum_{i=1}^{\infty}s_i\sum_{1\le j<k\le l}\left(\left(\frac{d_i}{i}\right)^2\sum_{\sigma \in D^{l-2}_{i}}f(\sigma) + \sum_{m=i+1}^{\infty}\sum_{\substack{1\le p\le l\\p\notin\{j,k\}}}\sum_{\substack {\sigma \in A^l_{i}\\  \sigma_j=\sigma_k =m\\ \sigma_p =i}} f(\sigma) \right )\\
= \sum_{i=1}^{\infty}s_i   \sum_{1 \le j < k \le l } \left (\left ( \frac{d_i}{i} \right )^2 \sum_{\sigma \in D^{l-2}_{i}} f(\sigma) + \sum_{m=i+1}^{\infty}  \sum_{\substack {1 \le p \le l \\  p \notin \{j,k\}}} \frac{d_i}{i} \frac{d_m}{m} \frac{d_m}{m} \sum_{\substack {\sigma \in D^{l-3}_{i}}} f(\sigma) \right )\\
\le \sum_{i=1}^{\infty}s_i   \sum_{1 \le j < k \le l } \left (  \left ( \frac{d_i}{i} \right )^2 \sum_{\sigma \in D^{l-2}_{i}} f(\sigma) +(l-2)\left ( \frac{d_i}{i} \right )^2\sum_{m=i+1}^{\infty} \frac{d_m}{m}\sum_{\substack {\sigma \in D^{l-3}_{i}}} f(\sigma) \right )\\
\le \sum_{i=1}^{\infty}s_i   \sum_{1 \le j < k \le l } \left (  \left ( \frac{d_i}{i} \right )^2 \sum_{\sigma \in D^{l-2}_{i}} f(\sigma) +(l-2)\left ( \frac{d_i}{i} \right )^2 \sum_{\substack {\sigma \in D^{l-2}_{i}}} f(\sigma) \right )\\
\le (l-1) \binom{l}{2} \sum_{i=1}^{\infty}s_i    \left ( \frac{d_i}{i} \right )^2  \sum_{\substack {\sigma \in D^{l-2}_{i}}} f(\sigma)
= (l-1) \binom{l}{2} \sum_{i=1}^{\infty} \frac{s_i}{i}  \frac{d_i}{i} t^{l-2}_i
< \infty.
\end{multline*}
As before, the last inequality follows as $\{s_i/i\}_{i=1}^{\infty}$  and $\{t_i\}_{i=1}^{\infty}$ are bounded sequences.

Putting (\ref{star}) and (\ref{starstar}) together we have that 
\[
\sum_{i=1}^{\infty}s_i \sum_{\sigma \in  C^l_i} f(\sigma)  = \infty. \]
Noting
\[
\sum_{i=1}^{\infty}s_i \sum_{\sigma \in  C^l_i} f(\sigma)  =l!\sum_{i=1}^{\infty}s_i \sum_{\sigma \in  B^l_i} f(\sigma),\]
the proof is complete.
\end{proof}

\begin{lemma}\label{lemma:lowertrees}
Let $k \ge 2$. Then, there exists $C$ and $N$ such that for all $i \in \mathbb{N}$, we have that
\[a(2)_i \ge C s_{N,i}, \mbox { and } 
\]
\[a(k)_i \ge C \sum_{j=3}^{i+3-k} s_{N,j-1} \sum _{\sigma \in B^{k-2}_{j, i}} f(\sigma)  \ \ \ \  \mbox{ for } k \ge 3.
\]
\end{lemma}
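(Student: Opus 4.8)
The plan is to prove both inequalities at once by induction on $k$, using the lower bound of Lemma~\ref{lemma:recursion}, namely $a(k)_i\ge C_1\sum_{j=k}^i\frac{d_j}{j}\,a(k-1)_{j-1}$, as the only engine. At each step the multiplicative constant only changes by a bounded factor, so the final $C$ is allowed to depend on $k$, whereas the same $N$ will serve for all $k$.

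For the base case $k=2$ the key observation is that the sparse regime produces linearly many isolated vertices. For $m\ge 1$ the vertex $v_m$ is isolated inside $\{v_0,\dots,v_i\}$ precisely when $d_m=0$ and none of $v_{m+1},\dots,v_i$ attaches back to it; since the attachments of distinct vertices are independent and $\Pr(v_\ell\not\sim v_m)=\binom{\ell-1}{d_\ell}/\binom{\ell}{d_\ell}=1-d_\ell/\ell$, this event has probability $\prod_{\ell=m+1}^i(1-d_\ell/\ell)$. As $\sum d_\ell/\ell<\infty$, the constant $c_0:=\prod_{\ell\ge 2}(1-d_\ell/\ell)$ is strictly positive, so $a(1)_{j-1}\ge c_0\cdot\#\{1\le m\le j-1:d_m=0\}=c_0\bigl((j-1)-s_{j-1}\bigr)$. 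Using Proposition~\ref{lemma:limit}, fix $N\ge 2$ with $s_i\le i/2$ for all $i\ge N$; then $a(1)_{j-1}\ge\frac{c_0}{2}(j-1)\ge\frac{c_0}{4}j$ for $j\ge N+1$. Substituting this into Lemma~\ref{lemma:recursion} with $k=2$ and discarding the nonnegative terms with $j\le N$ gives $a(2)_i\ge C_1\sum_{j=N+1}^i\frac{d_j}{j}\cdot\frac{c_0}{4}j=\frac{C_1c_0}{4}\,s_{N+1,i}$, which is the claimed bound once $N+1$ is renamed $N$.

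For the inductive step (now $k\ge 3$) I would assume the bound for $k-1$ and append the recursion index $j$ to the index tuples. When $k=3$ the inductive hypothesis is the $k=2$ bound, read through the identifications $B^1_{j,i}=\{(j)\}$ for $j\le i$ (and $B^1_{j,i}=\emptyset$ otherwise) and $f((j))=d_j/j$; the argument below then applies verbatim. In general, substitute the bound for $a(k-1)_{j-1}$ --- the inductive hypothesis with $i$ replaced by $j-1$, so its outer index $j'$ ranges over $3\le j'\le j+3-k$ and its inner sum is over $\tau\in B^{k-3}_{j',\,j-1}$ --- into $a(k)_i\ge C_1\sum_{j=k}^i\frac{d_j}{j}a(k-1)_{j-1}$ to obtain a triple sum. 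For each admissible $\tau$, the tuple $(\tau,j)$ is increasing with minimum $j'$ and maximum $j\le i$, hence lies in $B^{k-2}_{j',i}$, and $\frac{d_j}{j}f(\tau)=f\bigl((\tau,j)\bigr)$; conversely every $\sigma\in B^{k-2}_{j',i}$ arises this way with $\tau=(\sigma_1,\dots,\sigma_{k-3})$ and $j=\sigma_{k-2}$. Interchanging the order of summation so that $j'$ runs first, the range of the recursion index becomes $\{j'+k-3,\dots,i\}$, which is exactly the set of possible values of $\max\sigma$ over $\sigma\in B^{k-2}_{j',i}$; hence the inner sum over $j$ collapses to $\sum_{\sigma\in B^{k-2}_{j',i}}f(\sigma)$, and non-emptiness of $B^{k-2}_{j',i}$ forces $j'\le i+3-k$. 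This yields $a(k)_i\ge C_1C_{k-1}\sum_{j'=3}^{i+3-k}s_{N,j'-1}\sum_{\sigma\in B^{k-2}_{j',i}}f(\sigma)$, as required, with the same $N$.

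The main obstacle is the index bookkeeping in the inductive step: one has to verify that interchanging the two outer summations loses nothing, i.e.\ that the range $\{j'+k-3,\dots,i\}$ of the recursion index coincides with the range of $\max\sigma$ over $\sigma\in B^{k-2}_{j',i}$, and that the upper limits $j+3-k$ (in the hypothesis) and $i+3-k$ (in the conclusion) are precisely those imposed by non-emptiness of the relevant $B$-sets. Everything else is a routine substitution, and since we only ever discard nonnegative terms, the lower-bound direction is never in jeopardy.
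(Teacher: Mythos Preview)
Your proposal is correct and follows essentially the same inductive scheme as the paper: apply the lower bound of Lemma~\ref{lemma:recursion}, establish the $k=2$ base, then for $k\ge 3$ append the new recursion index $j$ to the tuple and interchange the summations to rebuild $B^{k-2}_{j',i}$. Your bookkeeping of the ranges $j\in\{j'+k-3,\dots,i\}$ and $j'\le i+3-k$ is accurate.

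The one genuine (though minor) difference is in the base case. The paper bounds $a(1)_{j-1}$ via the \emph{pointwise} inequality ``number of singletons $\ge j-2s_{j-1}$'' (each edge removes at most two vertices from the singleton pool), which immediately gives $a(1)_{j-1}\ge j-2s_{j-1}$ without any probabilistic computation. You instead compute the isolation probability of each vertex and lower-bound it by the tail product $c_0=\prod_{\ell\ge 2}(1-d_\ell/\ell)>0$. Both routes reach $a(1)_{j-1}\ge cj$ for $j$ large; the paper's version is a shade more elementary (no need to invoke $\sum d_\ell/\ell<\infty$ for the positivity of $c_0$), while yours has the virtue of making explicit where the sparse hypothesis enters. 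From that point on the two proofs are the same.
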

\begin{proof}
Let $N\geq 2$. Since $s_n/n\to0$, there exists $N$ such that for all $n>N$, $2s_n/n\leq1/2$.

First consider $k=2$. From Lemma~\ref{lemma:recursion} there exists a constant
$C$ such that for all $i\geq 2$
\[
a(2)_i\geq C\sum_{j=2}^i\frac{a(1)_{j-1}}{j} d_j.
\]
So for all $i\geq N$
\begin{multline*}
a(2)_i
\geq C\sum_{j=N}^i\frac{j-2s_{j-1}}{j}d_j
\geq C\sum_{j=N}^i\frac{j-2s_{j}}{j}d_j\\
= C\sum_{j=N}^i\left(1-\frac{2s_{j}}{j}\right)d_j
\geq\frac{C}{2}\sum_{j=N}^i d_j
=\frac{C}{2} s_{N,i}.
\end{multline*}

Now assume $k\geq 3$. We will proceed by induction, so we consider $k=3$ first.
There exist $C$ and $C'$ such that for all $i\geq 3$
\[
a(3)_i
\geq C\sum_{j=3}^i\frac{a(2)_{j-1}}{j}d_j
\geq C'\sum_{j=3}^i s_{N,j-1}\frac{d_j}{j}
= C'\sum_{j=3}^i s_{N,j-1}\sum_{\sigma  \in B_{j,i}^1} f(\sigma).
\]
Now assume that $k\geq 4$. There exists a $C$ such that for all $i$
\begin{multline*}
a(k)_i
\geq C\sum_{j=k}^i\frac{a(k-1)_{j-1}}{j} d_j
\geq C\sum_{j=k}^i\sum_{l=3}^{j-k+3}s_{N,l-1}\sum_{\sigma\in B_{l,j-1}^{k-3}}f(\sigma)\frac{d_j}{j}\\
\geq C\sum_{l=3}^{i-k+3}s_{N,l-1}\sum_{j=l-3+k}^i\frac{d_j}{j}\sum_{\sigma\in B_{l,j-1}^{k-3}}f(\sigma)
=C\sum_{l=3}^{i-k+3}s_{N,l-1}\sum_{\sigma\in B_{l,i}^{k-2}} f(\sigma).
\end{multline*}
\end{proof}

\begin{lemma}\label{lemma:expectation}
Let $k \ge 2$. Then,
\[  \left [\sum_{j=1}^{\infty}d_j t_j^{k-2} = \infty  \right ] \implies  \left [ \lim_{i \rightarrow \infty} a(k)_i = \infty \right ].
\]
\end{lemma}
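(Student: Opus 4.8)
The plan is to establish a lower bound on $a(k)_i$ that diverges whenever $\sum_j d_j t_j^{k-2}=\infty$, using the estimates already assembled in Lemma~\ref{lemma:lowertrees} together with the divergence-transfer result of Lemma~\ref{lemma:bothdiverge}. For $k=2$ this is immediate: Lemma~\ref{lemma:lowertrees} gives $a(2)_i\geq Cs_{N,i}$, and $\sum_j d_j t_j^{0}=\sum_j d_j=\infty$ (since $t_j^0=1$) forces $s_{N,i}\to\infty$, hence $a(2)_i\to\infty$. So the real content is $k\geq 3$.

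For $k\geq 3$ the idea is to fix $N$ as in Lemma~\ref{lemma:lowertrees} and compare the lower bound
\[
a(k)_i \geq C\sum_{j=3}^{i+3-k} s_{N,j-1}\sum_{\sigma\in B^{k-2}_{j,i}} f(\sigma)
\]
with the divergent series $\sum_{i} s_i \sum_{\sigma\in B^{k-2}_i} f(\sigma)$ supplied by Lemma~\ref{lemma:bothdiverge} (applied with $l=k-2$, which is legitimate precisely because $\sum_j d_j t_j^{k-2}=\infty$). The subtlety is that the lower bound for $a(k)_i$ uses $s_{N,j-1}$, a tail sum of the $d$'s starting at $N$, whereas Lemma~\ref{lemma:bothdiverge} produces a series in $s_j=s_{0,j}$; and it indexes by the \emph{minimum} coordinate $j$ of the string $\sigma$, with the maximum coordinate capped at $i$. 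I would argue that since $s_j - s_{N,j-1} = s_{0,N-1}+d_j$ is bounded by a constant $s_{0,N-1}+1$ (as $d_j\leq 1$), the two weights $s_j$ and $s_{N,j-1}$ differ by $O(1)$, so up to an additive error term the two series have the same behaviour; and the finite sum $\sum_{i=1}^\infty s_i\sum_{\sigma\in B^{k-2}_i} f(\sigma)$ differing from $\sum_{i=1}^\infty s_i\sum_{\sigma\in A^{k-2}_i}f(\sigma)/ (k-2)!$ (by the ordering symmetry noted in the proof of Lemma~\ref{lemma:bothdiverge}) lets me pass between the increasing-string and unrestricted versions as convenient.

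The key steps, in order, would be: (1) dispatch $k=2$ directly; (2) fix the constant $N$ and note $|s_j - s_{N,j-1}|\leq C'$ for an absolute constant $C'$, so that $\sum_i s_{N,i-1}\sum_{\sigma\in B^{k-2}_i} f(\sigma)=\infty$ follows from Lemma~\ref{lemma:bothdiverge} together with Lemma~\ref{lemma:finitesum} (which guarantees $\sum_i\sum_{\sigma\in A^{k-2}_i}f(\sigma)<\infty$, controlling the $O(1)$ correction); (3) observe that $\sum_{\sigma\in B^{k-2}_j} f(\sigma) = \lim_{i\to\infty}\sum_{\sigma\in B^{k-2}_{j,i}} f(\sigma)$ by monotone convergence, and that the partial sums $\sum_{j=3}^{i+3-k} s_{N,j-1}\sum_{\sigma\in B^{k-2}_{j,i}} f(\sigma)$ increase to $\sum_{j=3}^\infty s_{N,j-1}\sum_{\sigma\in B^{k-2}_j} f(\sigma)=\infty$ as $i\to\infty$ (all terms nonnegative, and the truncation $j\leq i+3-k$ together with $\max\sigma\leq i$ relaxes to the full range); (4) conclude $a(k)_i\to\infty$. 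I expect step (3) — making the interchange of limits and the handling of the double truncation ($j$-range and $\max\sigma\leq i$) fully rigorous — to be the main obstacle, though it is routine once one commits to monotone convergence and the nonnegativity of every term involved.
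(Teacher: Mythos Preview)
Your proposal is correct and follows essentially the same route as the paper's proof: both dispatch $k=2$ directly via Lemma~\ref{lemma:lowertrees}, and for $k\ge3$ combine Lemma~\ref{lemma:bothdiverge} (divergence of the $s_j$-weighted $B$-series), Lemma~\ref{lemma:finitesum} (to control the bounded correction $s_j-s_{N,j-1}\le N+1$), and the lower bound of Lemma~\ref{lemma:lowertrees}. The only cosmetic difference is that where you appeal to monotone convergence for the double truncation in step~(3), the paper spells out an explicit $\epsilon$-style argument, first choosing $i_0$ so that a partial sum exceeds $(M+1)/C$ and then choosing $i_1,\dots,i_{i_0+3-k}$ to approximate each inner sum.
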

\begin{proof} The case $k=2$ follows directly from the previous Lemma~\ref{lemma:lowertrees}.
 Hence, assume $k \ge 3$. By Lemma~\ref{lemma:bothdiverge} and the hypothesis we have that 
\[ \sum_{j=1}^{\infty}s_j \sum_{\sigma \in B^{k-2}_j} f(\sigma)=\infty.
\]
Let $N$ be the constant from Lemma~\ref{lemma:lowertrees}. Lemma~\ref{lemma:finitesum} and the fact that $0\leq s_j-s_{N,j-1} \leq N+1$ imply that 
\begin{align*} 0\leq\sum_{j=1}^{\infty}s_j \sum_{\sigma \in B^{k-2}_j} f(\sigma) - \sum_{j=1}^{\infty}s_{N,j-1} \sum_{\sigma \in B^{k-2}_j} f(\sigma) & \le  (N+1)  \sum_{j=1}^{\infty} \sum_{\sigma \in B^{k-2}_j} f(\sigma) \\
& \le   (N+1)  \sum_{j=1}^{\infty} \sum_{\sigma \in A^{k-2}_j} f(\sigma) < \infty.
\end{align*}
Hence we have that $\sum_{j=1}^{\infty}s_{N,j-1} \sum_{\sigma \in B^{k-2}_j} f(\sigma) =\infty$.\\
To complete the proof, we choose $M>0$. Then there is $i_0$ such that 
\[\sum_{j=3}^{i_0+3-k}s_{N,j-1} \sum_{\sigma \in B^{k-2}_j} f(\sigma) > \frac{M+1}{C},
\]
where $C$ is the constant from Lemma~\ref{lemma:lowertrees}.
Now for each $1 \le j \le i_0+3-k$, there is $i_j$ such that   
\[\sum_{\sigma \in B^{k-2}_{j,i_j} } f(\sigma)\ge \frac{-1}{C(i_0+3)(s_{i_0+3})} +\sum_{\sigma \in B^{k-2}_j} f(\sigma).
\]
Now for all $i \ge \max \{i_0, i_1,\ldots i_{i_0+3-k}\}$ we have that
\begin{multline*}
a(k)_i
\ge C \sum_{j=3}^{i+3-k} s_{N,j-1} \sum _{\sigma \in B^{k-2}_{j, i}} f(\sigma)
\ge C \sum_{j=3}^{i_0+3-k} s_{N,j-1} \sum _{\sigma \in B^{k-2}_{j, i_j}} f(\sigma)\\
\ge C \sum_{j=3}^{i_0+3-k} s_{N,j-1} \left (\frac{-1}{C(i_0+3)(s_{i_0+3})} +\sum_{\sigma \in B^{k-2}_j} f(\sigma) \right )\\
\ge -1+ C \sum_{j=3}^{i_0+3-k} s_{N,j-1} \sum_{\sigma \in B^{k-2}_j}f(\sigma)
> -1 + C \frac{M+1}{C}
= M,
\end{multline*}
completing the proof.
\end{proof}

Recall that a \emph{ray} is a one-way infinite path, i.e.\ a sequence of vertices $u_1,u_2,\ldots$ in a graph such that $u_i\sim u_{i+1}$ for all $i$.

\begin{theorem}\label{thm:noray}
Suppose $\sum d_i/i<\infty$. Then the process a.s.\ generates a graph with no
ray.
\end{theorem}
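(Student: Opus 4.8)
The plan is to reduce the existence of a ray to the existence of a single vertex with an infinite subtree, and then to kill the latter by a first moment estimate on the number of descendants at a large fixed distance from that vertex, exploiting the factorial decay of $t_{i+1}^{\,k}/k!$.

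First I would record the forest structure in the case $d_i\in\{0,1\}$. A vertex $v_i$ with $d_i=0$ starts a new component, while a vertex with $d_i=1$ attaches to a single uniformly chosen earlier vertex $\pi(i)\in\{v_0,\dots,v_{i-1}\}$; since a new vertex never merges two components, each component is the rooted subtree $T_r$ of descendants of its unique $d=0$ vertex $r$. A ray is a connected infinite subgraph, hence lies in a single component, so that component --- and thus $T_r$, where $r$ is its root --- is infinite. Consequently it suffices to prove that almost surely no vertex has an infinite subtree, and by countable subadditivity it is enough to show $\Pr[T_i\text{ is infinite}]=0$ for each fixed $i$.

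Now fix $i$, fix $k\ge 1$, and let $N_{i,k}$ denote the number of descendants of $v_i$ at graph distance exactly $k$. Such a descendant is determined by a strictly increasing tuple $i<\sigma_1<\dots<\sigma_k$ forming a descending path $v_i\to v_{\sigma_1}\to\dots\to v_{\sigma_k}$ with $\pi(\sigma_l)=\sigma_{l-1}$ (with the convention $\sigma_0=i$); as distinct vertices attach independently and $\Pr[\pi(m)=j]=d_m/m$ for $j<m$, the probability of a fixed such path equals $\prod_{l=1}^{k}d_{\sigma_l}/\sigma_l=f(\sigma_1,\dots,\sigma_k)$. Summing, and passing from increasing to arbitrary tuples at the cost of a factor $1/k!$,
\[
E[N_{i,k}]=\sum_{i<\sigma_1<\dots<\sigma_k}f(\sigma)\le\frac{1}{k!}\Bigl(\sum_{j>i}\frac{d_j}{j}\Bigr)^{k}=\frac{t_{i+1}^{\,k}}{k!}.
\]
By the second part of Theorem~\ref{thm:degrees} the graph is almost surely locally finite, and on that event an infinite $T_i$ must contain a vertex at every distance $k$ from $v_i$, because a ball of finite radius in a locally finite graph is finite. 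Hence $\Pr[T_i\text{ infinite}]\le\Pr[N_{i,k}\ge 1]\le E[N_{i,k}]\le t_{i+1}^{\,k}/k!$ holds for every $k$; since $t_{i+1}<\infty$, letting $k\to\infty$ yields $\Pr[T_i\text{ infinite}]=0$. Summing over $i$ gives the theorem.

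I expect the only real subtlety to be the choice of the right first moment. A direct count of paths of a fixed length typically has infinite expectation under mere sparsity (for instance if the $1$'s are placed at indices comparable to $m\log^{2}m$, the expected number of descending paths of any fixed length is infinite), so one cannot argue via ``no long path.'' The point is instead to anchor at a single root $v_i$ and let the distance grow, where the $1/k!$ overwhelms the fixed number $t_{i+1}^{\,k}$; the combinatorial reduction ``ray $\Rightarrow$ an infinite subtree rooted at one vertex'' is precisely what licenses this anchoring. The remaining pieces --- the deterministic forest structure, independence of attachments, and local finiteness from Theorem~\ref{thm:degrees} --- are routine.
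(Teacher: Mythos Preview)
Your proof is correct and follows essentially the same route as the paper: both anchor at a fixed vertex $v_i$ and bound, by a first-moment (union) estimate, the probability that there is a descending path of length $k$ from $v_i$, then let $k\to\infty$. The only cosmetic difference is that you retain the $1/k!$ from ordering and thus get $E[N_{i,k}]\le t_{i+1}^{\,k}/k!\to 0$ for every $i$, whereas the paper drops this factor, obtains the cruder bound $t_i^{\,k}$, and compensates by first choosing $i$ large enough that $t_i<1$.
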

\begin{proof}
The hypothesis implies that $t_n\to 0$, so there exists a positive integer $N$
such that for all $i\geq N$, $t_i<1$. Fix $i\geq N$. For $k\geq 2$, let $E_k$
denote the event that the graph will contain a path of length $k$ that starts
at the vertex $v_i$ and for all vertex $v_j$ on the path we have $j\geq i$.

\begin{multline*}
\Pr(E_k)
= \sum_{\sigma\in B_{i}^{k+1}}\Pr(v_{\sigma_{l+1}}\sim v_{\sigma_l}\text{ for
	all }1\leq l \le k)\\
= \sum_{\sigma\in
	B_{i}^{k+1}}\frac{d_{\sigma_2}}{\sigma_2}\cdots\frac{d_{\sigma_{k+1}}}{\sigma_{k+1}}
\le \sum_{\sigma\in D_{i}^{k}}f(\sigma)
=t_{i}^{k}.
\end{multline*}
Hence $\lim_{k\to\infty}\Pr(E_k)=0$. We conclude that the probability that a
ray emanates from the vertex $v_i$ such that every vertex of the ray is beyond
$v_i$ is $0$. Then a.s., for all $i\geq N$, such ray does not exist.

It is easy to see that if the graph had a ray, it would also have a ray
$v_{i_0}v_{i_1}\ldots$ such that $i_0\geq N$, and for all $j\in\mathbb{N}$,
$i_j\geq i_0$. But we have just seen that the probability of that is $0$.
\end{proof}

\subsection{Proof of Theorem~\ref{thm:zo}}
Part \ref{thmzo:dense}) is an immediate consequence of
Theorem~\ref{thm:degrees} in the special case of zero-one sequences.

For part \ref{thmzo:sparse}), let $m<k$; Lemma~\ref{lemma:expectation} implies
that $a(m)_i\to\infty$, that is the expected number of components of size $m$
is infinity. But we will prove a stronger statement, namely that a.s.\ there
are infinitely many components of size $m$. We will do this in two steps. First,
we will show that a.s.\ infinitely many components of size $m$ are created. Then, using this fact we will show that a.s.\ the final graph has infinitely many components of size $m$.

Let us proceed to show that infinitely many components of size $m$ are
created. If $m=1$, then this follows from the fact that the sequence $\{d_i\}$
contains infinitely many zeros. Let $m\geq 2$. Let $N_j$ be the random
variable that counts the number of components of size $m-1$ spanned by
vertices $\{v_0,\ldots,v_j\}$. Let $E_j$ be the event that a
component of size $m$ is created at step $j$ from vertex $v_j$.  Then,
\begin{multline*}
	\Pr(E_j)=\sum_{\ell=0}^\infty\Pr(N_{j-1}=\ell)\frac{ \ell (m-1)d_j}{j}\\
	=(m-1)\frac{d_j}{j}\sum_{\ell=0}^\infty\ell\Pr(N_{j-1}=\ell)=(m-1)\frac{d_j}{j}a(m-1)_{j-1}.
\end{multline*}

By Lemma~\ref{lemma:recursion},
there exists a constant $C$, such that for all $i\geq 1$,
\[
a(m)_i\leq C\sum_{j=1}^i\frac{a(m-1)_{j-1}}{j}d_j.
\]
Since $a(m)_i\to\infty$, we conclude that
\[
\sum_{j=1}^\infty\Pr(E_j)=
(m-1)\sum_{j=1}^\infty\frac{d_j}{j}a(m-1)_{j-1}=\infty.
\]
To show that a.s.\ infinitely many components of size $m$ are created, it suffices to show that for all $i_0 \in {\mathbb N}$, $\Pr(\cup_{j=i_0}^{\infty} E_j) =1$. We will do this using the following theorem, sometimes referred to as the counterpart of the Borel-Cantelli lemma \cite{Bruss-80}.

\begin{lemma}\label{lemma:BC}
	Let $A_1,A_2,\ldots$ be a sequence of events such that $A_i \subseteq A_{i+1}$. Then,\[
	\Pr \left (\cup_{j=1}^\infty A_j \right) =1 \ \ \   \iff \ \ \ \sum_{j=1 }^{\infty} \Pr(A_{j+1}|A_j^c) = \infty.\]
\end{lemma}

We apply the above lemma to the situation where $A_j = \cup_{i=i_0}^{i_0+j-1} E_i$. We note that for all $j \ge 1$, we have that 
\[ \Pr(A_{{j+1}}|A_{j}^c) \ge \Pr(E_{j+i_0}),
\]
implying that \[  \sum_{j=i_0 }^{\infty} \Pr(A_{j+1}|A_{j}^c) = \infty. \]
By Lemma~\ref{lemma:BC} and the fact that $\cup_{i=1}^{\infty}A_i = \cup_{i=i_0}^{\infty} E_i$, we have that 
$\Pr (\cup_{j=i_0}^\infty E_i) =1$.

We next show that a.s.\ the final graph has infinitely many components of size $m$. It suffices to show that for every $i \in {\mathbb N}$ and $\varepsilon >0$, the probability that the final graph contains a component of size $m$ containing a vertex $v_j$, $j \ge i$, has probability greater than $1-\varepsilon$. Indeed, this is the case as it implies that for all $i$, with probability one, a component of size $m$ with a vertex $v_j$, $j \ge i$ will exist. Then, taking intersection over all $i$'s, we obtain the desired result with probability one. 

If a component of size $m$ is created at vertex $v_j$, the
probability that it will not be destroyed is
\[
q_j=\prod_{\ell=j+1}^\infty\left(1-\frac{d_{\ell} m}{\ell}\right).
\]
As by hypothesis $\sum \frac{d_{\ell} }{\ell}  = \infty$, we have that $q_j\to 1$ as $j\to\infty$. Since a.s.\ $E_j$ occurs for infinitely many $j$'s, we have that for sufficient large $j$ we create a component of size $m$ without later destroying it with probability at least $1 -\varepsilon$.

For any $m'\geq k$, Lemma~\ref{lemma:upbd} shows that the expected number of
components of size $m'$ is finite, therefore a.s.\ there are finitely many
components of size $m'$.

There are two things that remain to be proven to finish the proof of the theorem.
First that if $m<k$, and $T$ is a tree with $|T|=m$, then a.s.\ there are
infinitely many components of the graph isomorphic to $T$. From the argument
above, we have that a.s.\ there are infinitely many components of size $m$, and
also, if $C$ is a component, $\Pr(C\cong T\ |\ |C|=m)>0$, so the statement
follows.

The second thing is that a.s.\ there is no infinite component of the graph.
From Theorem~\ref{thm:degrees}, we know that a.s.\ each vertex is of finite
degree (i.e.\ the graph is locally finite). Every locally finite connected
infinite
graph contains a ray (see e.g.\ Proposition 8.2.1.\ in \cite{Die-GT}). So
Theorem~\ref{thm:noray} finishes the proof.
\qed

\begin{corollary}
If $\sum_{i=1}^\infty\frac{d_i s_i}{i}<\infty$, then the space has
infinitely many atoms, and all of them are of the form $F\cup F_1\cup F_2$ where $F$ is a
finite forest.
\end{corollary}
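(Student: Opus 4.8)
The plan is to obtain this as a direct specialization of Theorem~\ref{thm:zo}(\ref{thmzo:sparse}). I will show that, under the hypothesis $\sum_{i}d_is_i/i<\infty$, the integer $k=\min\{\kappa\ge 2:\sum_{l}d_lt_{l+1}^{\kappa-2}<\infty\}$ appearing in that theorem equals $3$. Once this is established, $\bigcup_{i<k}F_i=F_1\cup F_2$, and the conclusion — infinitely many atoms, each of the form $F\cup F_1\cup F_2$ with $F$ a finite forest — is exactly what Theorem~\ref{thm:zo}(\ref{thmzo:sparse}) delivers.

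First I would check that Theorem~\ref{thm:zo}(\ref{thmzo:sparse}) applies, i.e.\ that $\sum_i d_i/i<\infty$. We are in the zero-one setting with infinitely many $1$'s (if there were only finitely many, $\sum_i d_is_i/i$ would be a finite sum and the statement would be vacuous), so $s_i\to\infty$ and in particular $s_i\ge 1$ for all sufficiently large $i$; hence $\sum_i d_i/i$ is dominated, up to finitely many terms, by $\sum_i d_is_i/i$ and therefore converges.

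Next I would pin down $k$. For $\kappa=2$ the relevant series is $\sum_l d_lt_{l+1}^{0}=\sum_l d_l$, which diverges because the sequence has infinitely many $1$'s; thus $2$ is excluded and $k\ge 3$. For $\kappa=3$, interchanging the order of summation (permissible since every term is nonnegative) gives
\[
\sum_{l}d_lt_{l+1}=\sum_{l}d_l\sum_{i>l}\frac{d_i}{i}=\sum_{i}\frac{d_i}{i}\sum_{l<i}d_l\le\sum_{i}\frac{d_is_i}{i}<\infty,
\]
so $3$ lies in the set and $k=3$, as desired.

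There is essentially no serious obstacle here; the whole argument is a short reindexing riding on Theorem~\ref{thm:zo}. The only place demanding any care is the order-of-summation swap in the $\kappa=3$ computation together with the estimate $\sum_{l<i}d_l\le s_i$, and both are routine. It is worth remarking that this corollary also re-proves, via the general machinery built up in this section, the characterization of the ``very sparse'' case anticipated earlier.
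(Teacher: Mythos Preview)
Your proof is correct and follows the same route as the paper: both show $\sum_l d_l t_{l+1}\le\sum_i d_is_i/i<\infty$ via the same interchange of summation, forcing $k=3$ in Theorem~\ref{thm:zo}(\ref{thmzo:sparse}). You are simply more explicit than the paper in verifying the preconditions ($\sum_i d_i/i<\infty$ and $k\ge 3$), which the paper leaves to the standing assumptions of the subsection.
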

\begin{proof}
\[
\sum_{j=1}^\infty d_j t_{j+1}
\leq\sum_{j=1}^\infty d_j t_j
=\sum_{j=1}^\infty d_j\sum_{i=j}^\infty\frac{d_i}{i}
=\sum_{i=1}^\infty \frac{d_i}{i}\sum_{j=1}^i d_j
=\sum_{i=1}^\infty\frac{d_i s_i}{i}
<\infty,
\]
so Theorem~\ref{thm:zo} implies the statement.
\end{proof}
The following theorem is a natural analogue of Corollary~\ref{cor:dblrnd1}.
\begin{theorem}\label{thm:dblrnd2}
Fix $0<p<1$, and consider the double random process with $d_i=1$ with
probability $p$, otherwise $d_i=0$ for $i>0$. The process a.s.\ generates infinitely many
copies of $\omega$-trees.
\end{theorem}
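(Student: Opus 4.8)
The plan is to reduce the ``double random process'' with $d_i=1$ with probability $p$ to the deterministic results already proved in Theorem~\ref{thm:zo}, by exhibiting, almost surely with respect to the randomness of the sequence $\{d_i\}$, a single realized sequence that satisfies the hypotheses of part~\ref{thmzo:dense}). Concretely, first I would observe that the number of $\omega$-trees produced is governed by two facts from Theorem~\ref{thm:zo}\,\ref{thmzo:dense}): when $\sum d_i/i=\infty$ the space is concentrated on a disjoint union of $\omega$-trees, and the number of components equals the number of zeroes in the sequence. So it suffices to show that, a.s.\ over the choice of the $d_i$, the realized sequence has $\sum_i d_i/i=\infty$ \emph{and} has infinitely many zeroes. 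Both are easy: the $d_i$ are i.i.d.\ Bernoulli$(p)$ for $i>0$, so almost surely infinitely many are $0$ and infinitely many are $1$; and for the divergence, note $\mathbb{E}\!\left[\sum_i d_i/i\right]=p\sum_i 1/i=\infty$, and more carefully, by the second Borel--Cantelli lemma (the events $\{d_i=1\}$ are independent) the partial sums $\sum_{i\le n} d_i/i$ diverge a.s.\ (e.g.\ compare with $\sum_{i:\,d_i=1} 1/i$, which has infinitely many terms spread out enough to diverge, since on any dyadic block $[2^k,2^{k+1})$ a positive fraction of indices carry $d_i=1$ a.s.\ by concentration of the binomial, contributing $\ge c/2$ per block).

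The cleanest way to package the block argument: for $k\ge 1$ let $W_k=\sum_{i=2^k}^{2^{k+1}-1} d_i$; these are independent with mean $p\cdot 2^k$, so by Chebyshev and Borel--Cantelli, a.s.\ $W_k\ge \tfrac{p}{2}2^k$ for all large $k$, whence $\sum_{i=2^k}^{2^{k+1}-1} d_i/i \ge W_k/2^{k+1}\ge p/4$ for all large $k$, and summing over $k$ gives $\sum_i d_i/i=\infty$ a.s. Combined with the a.s.\ infinitude of zeroes, Theorem~\ref{thm:zo}\,\ref{thmzo:dense}) applies conditionally on (almost every) realized sequence: the conditional law of the graph is concentrated on a disjoint union of $\omega$-trees with infinitely many components. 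Integrating over the sequence, the double random process a.s.\ generates a graph that is a disjoint union of infinitely many $\omega$-trees, which in particular contains infinitely many copies of $\omega$-trees.

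The main obstacle is essentially bookkeeping rather than depth: one must be careful that Theorem~\ref{thm:zo} is a statement about a \emph{fixed} sequence, so the argument proceeds by conditioning on the sequence, verifying its hypotheses hold on a set of sequences of probability one, and then applying Fubini/Tonelli to conclude the unconditional statement; no new probabilistic machinery beyond Borel--Cantelli and a one-line binomial concentration bound is needed. A secondary point worth a sentence is that the excerpt's $\omega$-tree components each have infinite degree at every vertex, so ``infinitely many copies of $\omega$-trees'' is literally the statement that there are infinitely many components, each an $\omega$-tree --- which is exactly what the ``number of components equals the number of zeroes'' clause delivers.
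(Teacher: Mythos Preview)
Your proposal is correct and follows essentially the same route as the paper: reduce to Theorem~\ref{thm:zo}\,\ref{thmzo:dense}) by verifying, almost surely in the randomness of the sequence, that $\sum_i d_i/i=\infty$ (and, as you correctly add, that there are infinitely many zeroes). The only difference is technical: the paper proves divergence by applying Chebyshev once to $X_n=\sum_{i\le n} d_i/i$, observing that $\mathbb{E}[X_n]\ge p\ln n\to\infty$ while $\operatorname{Var}[X_n]\le\sum_i p(1-p)/i^2$ stays bounded, so $\Pr[X_n\le M]\to 0$ for every $M$; your dyadic-block Chebyshev plus Borel--Cantelli achieves the same end with slightly more machinery but is equally valid.
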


\begin{proof}
We will prove that the hypotheses of Theorem~\ref{thm:zo}~(\ref{thmzo:dense}) are satisfied a.s. Let
$X_n=\sum_{i=1}^n d_i/i$. On one hand $\mu_n:=E[X_n]\geq p\ln n$. On the other hand,
\[
\sigma^2_n:=\Var[X_n]=\sum_{i=1}^n\Var\left[\frac{d_i}{i}\right]\leq\sum_{i=1}^n\frac{p-p^2}{i^2}\leq 2.
\]
Fix $M>0$. Using Chebyshev's inequality, if $\mu_n>M$,
\[
\Pr[X_n\leq M]\leq\Pr[|X_n-\mu_n|\geq
\mu_n-M]\leq\frac{\sigma_n^2}{(\mu_n-M)^2}\leq\frac{2}{(p\ln n-M)^2}\to 0.
\]
Hence, a.s.\ $X_n\to\infty$.
\end{proof}

\section*{Acknowledgement}
The authors are indebted to the referee for suggesting substantial changes which improved the paper.

\bibliographystyle{amsplain}

\end{document}